\theoremstyle{definition} \newtheorem{defn}[equation]{Definition}
\theoremstyle{definition} \newtheorem{prop-def}[equation]{Proposition-Definition}
\theoremstyle{plain} \newtheorem{thm}[equation]{Theorem}
\theoremstyle{plain} \newtheorem{lemma}[equation]{Lemma}
\theoremstyle{plain} \newtheorem{prop}[equation]{Proposition}
\theoremstyle{plain} \newtheorem{cor}[equation]{Corollary}
\theoremstyle{plain} 
\theoremstyle{plain} \newtheorem*{thm*}{Theorem}
\theoremstyle{remark} \newtheorem{rmk}[equation]{Remark}
\theoremstyle{remark} 
\theoremstyle{definition} 
\theoremstyle{remark} \newtheorem{ex}[equation]{Example}
\numberwithin{equation}{section}
\newtheoremstyle{TheoremNum}
{}{}              %
{\itshape}                      %
{}                              %
{\bfseries}                     %
{.}                             %
{ }                             %
{\thmname{#1}\thmnote{ \bfseries #3}}%
\theoremstyle{TheoremNum}
\newtheorem{thmn}{Theorem}
\newcommand{\Bl}{\mathrm{Bl}}
\newcommand{\C}{\mathbb{C}}
\newcommand{\Z}{\mathbb{Z}}
\newcommand{\R}{\mathbb{R}}
\newcommand{\bbP}{\mathbb{P}}
\newcommand{\id}{\mathrm{id}}
\newcommand{\CH}{\mathrm{CH}}
\newcommand{\cone}{\mathrm{Cone}}
\newcommand{\Proj}{\mathrm{Proj}}
\newcommand{\coker}{\mathrm{coker}}
\newcommand{\Spec}{\mathrm{Spec}}
\newcommand{\xs}{X_{\widetilde\Sigma}}
\newcommand{\Hom}{\mathrm{Hom}}
\renewcommand{\int}{\mathrm{int}}
\newcommand{\adj}[4]{\xymatrix{#1 \ar@<.2pc>[r]^-{#3} & \ar@<.2pc>[l]^-{#4} #2 }}
\newcommand{\eqv}[4]{\xymatrix{#1 \ar@<.5pc>[r]^-{#3} & \ar@<.5pc>[l]^-{#4}_-\sim #2 }}
\newcommand{\hbm}{H^{\mathrm{BM}}}
\newcommand{\Conv}{\mathrm{Conv}}
\newcommand{\Corr}{\mathrm{Corr}}
\newcommand{\cl}{\mathrm{cl}}
\renewcommand{\subset}{\subseteq}
\DeclareMathAlphabet{\mathbbe}{U}{bbold}{m}{n}
\newcommand{\be}{\begin{enumerate}}
\newcommand{\ee}{\end{enumerate}}
\definecolor{bubblegum}{rgb}{0.99, 0.76, 0.8}
\definecolor{dark-red}{rgb}{0.6,.15,.15}
\definecolor{dark-blue}{rgb}{.1,.1,.65}
\definecolor{dark-green}{rgb}{.1,.65,.1}
\title{The motive of a variety with cellular resolution of singularities}
\author{Bruno Stonek}
\address{Faculty of Mathematics, Informatics and Mechanics, University of Warsaw, ul. Banacha 2, 02-097 Warszawa, Poland}
\email{bstonek@mimuw.edu.pl}
\begin{document}
\date{\today}

\begin{abstract} A complex variety $X$ admits a \emph{cellular resolution of singularities} if there exists a resolution of singularities $\widetilde X\to X$ such that its exceptional locus as well as $\widetilde X$ and the singular locus of $X$ admit a cellular decomposition. We give a concrete description of the motive with compact support of $X$ in terms of its Borel--Moore homology, under some mild conditions. We give many examples, including rational projective curves and toric varieties of dimension two and three.
\end{abstract}

\maketitle

\section{Introduction}

The theory of motives dates back to Grothendieck in the 1960s. It has seen tremendous advances, notably since the work of Voevodsky, who introduced a category $DM$ (standing for ``derived category of motives'') satisfying many properties common to all homology theories, such as homotopy invariance, Mayer--Vietoris, Gysin sequences, etc. We work with Voevodsky's category of motives over the field of complex numbers: this restriction simplifies some technical aspects of the theory.

Any complex variety $X$, smooth or not, determines a motive $M(X)$: it is an object in $DM$. This extends to a functor $M$, and the philosophy is that every cohomology theory should factor through it: this immediately suggests that $M(X)$ contains much interesting and subtle information about $X$.  It is  thus natural to ask for a complete description of it.

We will also consider the compactly-supported variant $M^c(X)$: it coincides with $M(X)$ when $X$ is complete. The object $M^c(X)$ determines, for example, the Chow groups of $X$, via the formula $\CH_i(X)\cong \Hom_{DM}(\Z\{i\}, M^c(X))$ (\cite[Section 2.2]{voe00}). The Chow groups, in turn, are linked to the easier Borel--Moore homology groups via the cycle map $\CH_*(X)\to \hbm_{2*}(X)$. We can turn this around and ask: when is it enough to know the Borel--Moore homology to describe the motive of the variety?

Consider \emph{cellular varieties}, by which we mean varieties which admit a filtration by closed subvarieties such that the relative differences are disjoint unions of affine spaces. Their cycle map is an isomorphism. The odd Borel--Moore homology groups are zero, and the even ones are free, with rank given by the number of cells in each dimension. It was proven by Kahn \cite{kah99} that the motive with compact support of a cellular variety is completely determined by its Chow groups, so it is also completely determined by its Borel--Moore homology.

We generalize this by proving that the Borel--Moore homology of $X$ completely determines $M^c(X)$ in the case when $X$ admits a cellular resolution of singularities, see \cref{lem-mothom}.(\ref{item-cofiber}). By this we mean that there exists a resolution of singularities $\widetilde X\to X$ such that its exceptional locus as well as $\widetilde X$ and the singular locus of $X$ are all cellular. This description of $M^c(X)$ may not be all that explicit, but adding an extra hypothesis on the Borel--Moore homology of $X$ the result becomes very explicit:

\begin{thmn}[\ref{lem-mothom}%
] Let $X$ be a variety admitting a cellular resolution of singularities. Suppose that, for all $i=0,\dots,\dim(X)$ we have that $\hbm_{2i}(X)$ is free or $\hbm_{2i+1}(X)$ is zero. %
Then there is an isomorphism in $DM$
\[M^c(X)\cong \bigoplus_i \hbm_{2i}(X)\{i\} \oplus \hbm_{2i+1}(X)\{i\}[1].\]
\end{thmn}

We give many concrete examples, especially of motives of singular varieties. This was our initial motivation, since the literature contains few explicit computations of such motives. 

We first deal with projective rational curves: we prove that their motive is determined by the number of singularities it has as well as their number of branches (see \cref{prop-curve}).

We then turn to toric surfaces determined by fans in the plane. If the fan is complete, so that the surface is projective, the motive is determined by the number of rays as well as the index of the sublattice spanned by the minimal generators of the rays, the latter determining the torsion (\cref{thm-motive-surface}). If the fan is generated by a single cone, so that the variety is affine, then the motive with compact support is determined by the determinant of the two minimal generators of the cone (\cref{prop-mc-affine}). We can also describe the motive with compact support of quasiprojective toric surfaces, under some additional conditions on the fan (see \cref{ex-quasiproj}).

Finally, we turn to motives of toric varieties of higher dimension. Here, we may have toric varieties which do not admit a cellular resolution of singularities, because the singular locus is not cellular (cf. \cref{ex-bbfk}), but we can describe the motive (or motive with compact support) of many varieties of interest. We observe that the motive of the weighted $n$-dimensional projective space $\bbP(1,\dots,1,k)$, $k\geq 2$, is the same as the motive of $\bbP^n$.

\subsection{Notations and conventions}

By \emph{variety} we will mean a complex variety, i.e. a finite-type, separated, reduced scheme over $\Spec(\C)$, which we do not assume to be irreducible. %

We adopt the conventions of toric geometry from \cite{cox}. In particular, $N$ is a lattice (a free abelian group of finite rank) with dual lattice $M$; a \emph{cone} is a strongly convex, rational, polyhedral cone, which we further assume to be non-zero; %
we denote the normal affine toric variety of a cone in $N_\R$ by $U_\sigma$, where $N_\R\coloneqq N\otimes_\Z \R$. The normal toric variety of a fan $\Sigma$ is denoted $X_\Sigma$. The standard pairing between $M_\R$ and $N_\R$ is denoted by $\langle-,-\rangle$. We denote by $\{e_i\}_i$ the standard basis of $\R^n$.

We denote by $H_*$ singular homology with integral coefficients, and $\hbm_*$ denotes integral Borel--Moore homology. For any $i\geq 0$, we let $\CH_i(X)$ denote the $i$-th Chow group of the variety $X$, i.e. the group of cycles of dimension $i$ modulo linear equivalence.

We denote by $DM$ the triangulated category of (effective) %
Voevodsky motives over $\C$. In more detail: Let $\Corr$ denote the category whose objects are the smooth separated schemes of finite type over $\C$ and whose morphisms from $X$ to $Y$ are elements of the free abelian group generated by elementary correspondences.  Endow it with the Nisnevich topology and consider the derived category %
of the abelian category of abelian group-valued sheaves over it, i.e. Nisnevich sheaves with transfers. %
The triangulated category $DM$ is its $\mathbb{A}^1$-localization; %
by a \emph{triangle} in $DM$ we mean a distinguished/exact triangle.

Put differently, $DM$ is the homotopy category of the category of chain complexes of Nisnevich sheaves with transfers, endowed with an $\mathbb{A}^1$-local descent %
model structure;  %
we will, for example, take cofibers or homotopy pushouts in here. %
We denote by $M(X)$ the motive of a scheme $X$ over $\C$, not necessarily smooth, and we let $\Z\coloneqq M(\Spec(\C))$. We denote by $\Z(1)\in DM$ the Tate motivic complex, %
and for an abelian group $A$ and $n\geq 0$, we let $A\{n\}\coloneqq A\otimes \Z(n)[2n]\in DM$. A \emph{pure Tate motive} in $DM$ is a finite direct sum of motives of the form $\Z\{n\}$. We shall also make use of motives with compact support, denoted $M^c(X)$. 
See \cite{mvw}, \cite{cisinski-deglise}, \cite{rondigs-ostvaer} for details. %

\subsection{Acknowledgments}
The author would like to thank Joachim Jelisiejew for introducing him to toric varieties and motives, and for his assistance in this project. He would also like to thank Neeraj Deshmukh, Andrzej Weber and Magdalena Zielenkiewicz for the useful discussions.

\section{The main result}

\begin{defn} \cite[1.9.1]{fulton-intersection} A variety \emph{admits a cellular decomposition} (or simply: \emph{is cellular}) if there exists a filtration $X=X_n\supset X_{n-1}\supset \cdots \supset X_0 \supset X_{-1}=\emptyset$ by closed subvarieties with each $X_{i+1}\setminus X_i$ a disjoint union of varieties isomorphic to affine spaces $\mathbb{A}^{n_{ij}}$. %
\end{defn}

\begin{rmk} \label{rmk-bmcell} The Borel--Moore homology of cellular varieties is zero in odd degrees and free abelian in even degrees $2k$, of rank the number of cells of dimension $k$, that is, the sum of the $n_{i_j}$ which are equal to $k$.
See \cite[19.1.11]{fulton-intersection}, \cite[2.3]{rossello}, \cite[3.6.3]{petersen-tosteson}.
\end{rmk}

\begin{ex} \label{ex-cellular}
\begin{enumerate}
\item A zero-dimensional variety is cellular.
\item A smooth projective toric variety is cellular thanks to the Białynicki-Birula decomposition \cite{bb1}, \cite{bb2}, %
since we can choose a $\C^*$-action with a finite set of fixed points. %
\item \label{item-cellular-3} %
In fact, one can get away with less: Let $X$ be a smooth quasiprojective variety with an action of $\C^*$ such that the set of fixed points is finite. Then it suffices that the limits $\lim_{t\to 0}t\cdot x$ exist for all $x\in X$ to ensure  the existence of such a Białynicki-Birula decomposition. %
This follows from Sumihiro's theorem, for example. %
So $X$ is cellular under these hypotheses. 
\end{enumerate}
\end{ex}

We shall mostly be concerned with the following varieties:

\begin{defn} Let $X$ be a variety. We say $X$ \emph{admits a cellular resolution of singularities} if the singular locus of $X$ is cellular and there exists a resolution of singularities $\widetilde X\to X$ such that both its exceptional locus and $\widetilde X$ are cellular.
\end{defn}

\begin{prop} \label{lem-mv}Let $X$ be a %
variety, %
$Z$ be its singular locus and $\widetilde X\to X$ be a resolution of singularities with exceptional locus $E$, so that we have a pullback diagram
\[\begin{tikzcd}
	E & {\widetilde X} \\
	Z & X.
	\arrow["j", hook, from=1-1, to=1-2]
	\arrow["p"', from=1-1, to=2-1]
	\arrow["f", from=1-2, to=2-2]
	\arrow["i"', hook, from=2-1, to=2-2]
\end{tikzcd}\]
Then the maps in the diagram induce a long exact sequence in Borel--Moore homology groups
\[\cdots \to \hbm_{n+1}(X)\to \hbm_n(E)\to \hbm_n(Z)\oplus \hbm_n(\widetilde X) \to \hbm_n(X) \to \hbm_{n-1}(E) \to \cdots\] 
and a triangle in $DM$:
\[M^c(E)\to M^c(Z)\oplus M^c(\widetilde X) \to M^c(X)\to.\]
\begin{proof}
The first statement is classical (and it can be proven similarly as below, using the chain complexes of Borel--Moore homology).  %
The version for $M$ (not $M^c$) of the second statement is \cite[14.5.3]{mvw}. %
For $M^c$, we can give a simple proof. Since $M^c$ admits localization sequences \cite[16.15]{mvw}, we apply them to the closed subvarieties $E\subset \widetilde X$ and $Z\subset X$.  Let $k:\widetilde X\setminus E \to \widetilde X$ and $r:X\setminus Z\to X$ be the inclusions, and $F:\widetilde X\setminus E \to X\setminus Z$ be the induced map by $f$, so $F$ is an isomorphism. %
Thus, we get two distinguished triangles and a morphism between them: 
\[\begin{tikzcd}
	{M^c(E)} & {M^c(\widetilde X)} & {M^c(\widetilde X\setminus E)} & {} \\
	{M^c(Z)} & {M^c(X)} & {M^c(X\setminus Z)} & {}
	\arrow["{j_*}", from=1-1, to=1-2]
	\arrow["{p_*}", from=1-1, to=2-1]
	\arrow["{k^*}", from=1-2, to=1-3]
	\arrow["{f_*}", from=1-2, to=2-2]
	\arrow[from=1-3, to=1-4]
	\arrow["{F_*}", from=1-3, to=2-3]
	\arrow["{i_*}"', from=2-1, to=2-2]
	\arrow["{r^*}"', from=2-2, to=2-3]
	\arrow[from=2-3, to=2-4]
\end{tikzcd}\]
The right square commutes by the same argument as in \cite[2.5]{nie}. %
The map $F_*$ %
is an isomorphism in $DM$. %
Therefore, the square on the left is a homotopy pushout, which equivalently gives us the desired triangle.
\end{proof}
\end{prop}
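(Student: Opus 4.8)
The plan is to obtain both statements by the same mechanism: comparing the localization (Gysin) sequence attached to the pair $E\subset\widetilde X$ with the one attached to $Z\subset X$, along the maps induced by the resolution $f$. The one geometric input I would isolate first is that the square is Cartesian, so $E=f^{-1}(Z)$ and hence $\widetilde X\setminus E=f^{-1}(X\setminus Z)$; since a resolution of singularities is an isomorphism over the smooth locus $X\setminus Z$, the restriction $F\colon\widetilde X\setminus E\to X\setminus Z$ of $f$ is an isomorphism of varieties. This single fact is what makes the two localization sequences agree on their ``open'' terms, and it is the engine behind both parts.

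For the Borel--Moore statement I would write down the two localization long exact sequences, valid for any closed immersion,
\[\cdots\to\hbm_n(E)\to\hbm_n(\widetilde X)\to\hbm_n(\widetilde X\setminus E)\to\hbm_{n-1}(E)\to\cdots\]
\[\cdots\to\hbm_n(Z)\to\hbm_n(X)\to\hbm_n(X\setminus Z)\to\hbm_{n-1}(Z)\to\cdots,\]
and assemble them into a ladder using the pushforwards along $p$, $f$ and the comparison map $F_*$. Because $F_*$ is an isomorphism in every degree, a standard homological-algebra argument --- the algebraic mapping-cone construction, or equivalently a braid/$3\times3$ diagram chase --- splices the two sequences into the single Mayer--Vietoris sequence of the statement, with connecting homomorphisms built from the two boundary maps composed with the inverse of $F_*$.

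For $M^c$ I would run the identical argument one categorical level higher. The localization triangles for $M^c$ (a known property of $M^c$, see \cite{mvw}) applied to the two closed immersions give two distinguished triangles in $DM$, and $f$ induces a morphism between them whose three vertical legs are $p_*$, $f_*$ and $F_*$; here $F_*\colon M^c(\widetilde X\setminus E)\to M^c(X\setminus Z)$ is an isomorphism. Since $DM$ is the homotopy category of a stable model category, a morphism of distinguished triangles whose third leg is an equivalence exhibits the square formed by the remaining two legs as a homotopy pushout. Thus
\[\begin{tikzcd}
	{M^c(E)} & {M^c(\widetilde X)} \\
	{M^c(Z)} & {M^c(X)}
	\arrow["{j_*}", from=1-1, to=1-2]
	\arrow["{p_*}"', from=1-1, to=2-1]
	\arrow["{f_*}", from=1-2, to=2-2]
	\arrow["{i_*}"', from=2-1, to=2-2]
\end{tikzcd}\]
is homotopy cocartesian, and unwinding this pushout yields exactly the triangle $M^c(E)\to M^c(Z)\oplus M^c(\widetilde X)\to M^c(X)\to$.

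The one step I expect to require genuine care --- and which is \emph{not} the triangulated formalism --- is checking that the comparison ladder actually commutes, specifically the square
\[\begin{tikzcd}
	{M^c(\widetilde X)} & {M^c(\widetilde X\setminus E)} \\
	{M^c(X)} & {M^c(X\setminus Z)}
	\arrow["{k^*}", from=1-1, to=1-2]
	\arrow["{f_*}"', from=1-1, to=2-1]
	\arrow["{F_*}", from=1-2, to=2-2]
	\arrow["{r^*}"', from=2-1, to=2-2]
\end{tikzcd}\]
where $k$ and $r$ are the open inclusions. The subtlety is that $M^c$ is covariant for proper maps but contravariant for open immersions, so this square mixes proper pushforwards ($f_*$, $F_*$) with restriction maps ($k^*$, $r^*$), and its commutativity is a base-change identity $r^*f_*=F_*k^*$ rather than mere functoriality. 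Verifying this compatibility carefully is where I would spend most of the effort; the companion left-hand square, by contrast, commutes immediately since $i_*p_*$ and $f_*j_*$ are both the proper pushforward along the common composite $E\to X$.
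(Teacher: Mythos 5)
Your proposal follows essentially the same route as the paper: compare the two $M^c$-localization triangles along the map induced by $f$, use that $F$ is an isomorphism over the smooth locus to identify the third legs, and conclude that the left-hand square is a homotopy pushout (with the Borel--Moore sequence obtained by the parallel splicing argument). You also correctly isolate the one genuinely delicate point --- the proper-pushforward/open-restriction base-change square $r^*f_*=F_*k^*$ --- which the paper likewise does not verify from scratch but handles by citing the argument of \cite[2.5]{nie}.
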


Before we come to the main theorem, let us isolate some simple homological algebra remarks that will be used in the proof. %

\begin{rmk} \label{rmk-triangles}
\begin{enumerate}
\item Let $0\to A \to B \stackrel{g}{\to} C\to D\to 0$ be an exact sequence of abelian groups . It is the homology long exact sequence of the triangle $B \stackrel{g}{\to} C\to \cone(g)\to$ in $D(\Z)$.
\item For a map of abelian groups $h:X\to Y$, denote by $[h]$ the chain complex having $X$ in degree 1, $Y$ in degree 0, and the map $h$ between them. If $B\stackrel{g}{\to} C$ is a morphism of abelian groups and $\coker(g)$ is free, then $[g]$ is quasi-isomorphic to $[0:\ker(g)\to \coker(g)]$. Indeed, since $\coker(g)$ is free there is a splitting $q:\coker(g)\to C$ which allows us to define a quasi-isomorphism between the two complexes. Notice that $[0:\ker(g)\to \coker(g)]$ is $\coker(g)\oplus \ker(g)[1]$.
\item In conclusion, if $0\to A \to B \stackrel{g}{\to} C\to D\to 0$ is an exact sequence of abelian groups and $D$ is free, then we have a triangle in $D(\Z)$:
\[B\stackrel{g}{\to} C \to D\oplus A[1]\to.\]
\end{enumerate}
\end{rmk}

\begin{thm} \label{lem-mothom}
\begin{enumerate}
\item \label{item-mot1} Let $X$ be %
a %
cellular variety. 
There is an isomorphism in $DM$
\[M^c(X)\cong \bigoplus_{i=0}^{\dim X} \hbm_{2i}(X)\{i\}\]
natural in %
cellular $X$ and proper maps between them.
\item   \label{item-dos}
Let $X$ be a variety admitting a cellular resolution of singularities with diagram
\[\begin{tikzcd}
	E & {\widetilde X} \\
	Z & X.
	\arrow[hook, from=1-1, to=1-2]
	\arrow[from=1-1, to=2-1]
	\arrow[from=1-2, to=2-2]
	\arrow[hook, from=2-1, to=2-2]
\end{tikzcd}\]
\begin{enumerate}[(a)]
\item \label{item-cofiber} The motive with compact support $M^c(X)$ is isomorphic to the cofiber of the induced map in $DM$
\[\bigoplus_i \hbm_{2i}(E)\{i\} \to \bigoplus_i \hbm_{2i}(Z)\{i\}\oplus \hbm_{2i}(\widetilde X)\{i\}.\]
\item \label{item-mot2} \label{item-mot3}
If for all $i=0,\dots, \dim(X)$ we have that $\hbm_{2i}(X)$ is free abelian or $\hbm_{2i+1}(X)=0$, then
\[M^c(X)\cong \bigoplus_i \hbm_{2i}(X)\{i\} \oplus \hbm_{2i+1}(X)\{i\}[1].\]
\end{enumerate}
\item In all the results above, if $X$ is complete, then we can replace $M^c$ by $M$ and $\hbm$ by $H$.
\end{enumerate}
\begin{proof}
\begin{enumerate}
\item
A flat map $f:X\to Y$ of relative dimension $r$ induces a map $f^*:M^c(Y)\{r\}\to M^c(X)$, essentially by flat pullback \cite[16.23]{mvw}. Taking as $f$ the map $X\to \Spec(\C)$, we obtain a map $\cl_X:\Z\{n\}\to M^c(X)$, the fundamental class of $X$ in the $(2n,n)$-Borel--Moore motivic homology group of $X$ (see \cite[2.12]{nie}). For every $i\geq 0$, we can now define a map
\begin{equation}\label{eq-bm}\CH_i(X)\to \Hom(\Z\{i\}, M^c(X)).\end{equation} %
Indeed, we can take the class of the irreducible closed subvariety $Z$ to the composition
\[\xymatrix{\Z\{i\} \ar[r]^-{\cl_Z} & \ar[r]^-{i_*} M^c(Z) & M^c(X),}\]
where $i:Z\to X$ is the inclusion: this is the motivic Borel--Moore fundamental class of $Z$ in $X$. %
Taking adjoints, we get maps $\CH_i(X)\otimes \Z\{i\} = \CH_i(X)\{i\}\to M^c(X)$: summing over $i$, we get a map in $DM$
\[\bigoplus_i \CH_i(X)\{i\} \to M^c(X),\]
which is an isomorphism for $X$ cellular by \cite[3.5]{kah99}. We can check that this map is natural for proper maps $f: X\to Y$. It suffices to check naturality of the maps (\ref{eq-bm}). By the definition of proper pushforward \cite[1.4]{fulton-intersection}, it amounts to check that the two horizontal compositions in the following diagram are the same. We check this by checking commutativity of the diagram:
\[\begin{tikzcd}
	{\Z\{i\}} & {M^c(Z)} & {M^c(X)} & {M^c(Y)} \\
	{\Z\{i\}} & {M^c(f(Z))} & {M^c(Y)}
	\arrow["{\cl_Z}", from=1-1, to=1-2]
	\arrow["\id"', from=1-1, to=2-1]
	\arrow["{i_*}", from=1-2, to=1-3]
	\arrow["{f_*}"', from=1-2, to=2-2]
	\arrow["{f_*}", from=1-3, to=1-4]
	\arrow["{f_*}"', from=1-3, to=2-3]
	\arrow["\id", from=1-4, to=2-3]
	\arrow["{\cl_{\deg(f)[f(Z)]}}"', from=2-1, to=2-2]
	\arrow["{i_*}"', from=2-2, to=2-3]
\end{tikzcd}\]
The left square commutes because the proper pushforward in motives of compact support involves the degree in the same way as the proper pushforward of cycles, as was observed in \cite[2.4]{nie}. The middle square commutes by functoriality of proper pushforwards. So the maps (\ref{eq-bm}) are natural.

Moreover, there is a degree-doubling cycle map from Chow groups to Borel--Moore homology groups. It is natural for proper maps, %
and it is an isomorphism on cellular varieties \cite[19.1.11.(b)]{fulton-intersection}, \cite[2.3]{rossello}. %
\item \begin{enumerate}[a)] 
\item By (\ref{item-mot1}) we obtain a commutative square in $DM$: %
\[\begin{tikzcd}
	{M^c(E)} & {M^c(Z)\oplus M^c(\widetilde X)} \\
	{\bigoplus_i \hbm_{2i}(E)\{i\}} & {\bigoplus_i \hbm_{2i}(Z)\{i\}\oplus \bigoplus_i \hbm_{2i}(\widetilde X)\{i\}}.
	\arrow[from=1-1, to=1-2]
	\arrow["\cong"', from=1-1, to=2-1]
	\arrow["\cong"', from=1-2, to=2-2]
	\arrow[from=2-1, to=2-2]
\end{tikzcd}\]
The cofiber of the top row is $M^c(X)$ by \cref{lem-mv}, so comparing cofibers gives the result. 

\item We now identify this cofiber under additional hypotheses. Suppose  $\hbm_{2i+1}(X)=0$. The long exact Borel--Moore homology sequence for the resolution of singularities gives a short exact sequence
\[0\to \hbm_{2i}(E)\to \hbm_{2i}(Z)\oplus \hbm_{2i}(\widetilde X)\to \hbm_{2i}(X) \to 0\]
which we can express as the triangle \[\hbm_{2i}(E)\to \hbm_{2i}(Z)\oplus \hbm_{2i}(\widetilde X)\to \hbm_{2i}(X) \to\] in $D(\Z)$.

When $\hbm_{2i}(X)$ is free, then the long exact sequence gives the four-term exact sequence
\[0\to \hbm_{2i+1}(X)\to \hbm_{2i}(E) \to \hbm_{2i}(Z) \oplus \hbm_{2i}(\widetilde X) \to \hbm_{2i}(X) \to 0.\]
By \cref{rmk-triangles}, we may equivalently express this as a triangle in $D(\Z)$:
\[\hbm_{2i}(E) \to \hbm_{2i}(Z)\oplus \hbm_{2i}(\widetilde X) \to \hbm_{2i}(X) \oplus \hbm_{2i+1}(X)[1] \to.\]
Passing to $DM$, summing and twisting these triangles appropriately, we get a triangle in $DM$:
\[\bigoplus_i \hbm_{2i}(E)\{i\} \to \bigoplus_i \hbm_{2i}(Z)\{i\}\oplus \bigoplus_i \hbm_{2i}(\widetilde X)\{i\} \to \bigoplus_i \hbm_{2i}(X)\{i\} \oplus \hbm_{2i+1}(X)\{i\}[1] \to.\]
This computes the cofiber in (\ref{item-cofiber}) and we are done.
\end{enumerate}
\item Since $X$ is complete, $M^c(X)=M(X)$ and $\hbm=H$. \qedhere
\end{enumerate}
\end{proof}
\end{thm}

Before we move on to applications of (\ref{item-mot2}), our main result, let us first say a word about the easier case (\ref{item-mot1}).

\begin{rmk} \label{rmk-cell}Since we can describe the Borel--Moore homology of a cellular variety in terms of its cellular decomposition (\cref{rmk-bmcell}), we conclude that if $X$ is cellular and $a_i$ is the number of cells of dimension $i$, then
\[M^c(X)\cong \bigoplus_i \Z^{a_i}\{i\}.\]
When $X$ is complete, this gives a description of the motive of $X$. Compare with %
\cite[Section 6]{karpenko}, see also \cite[Section 3]{brosnan}.
\end{rmk}

\begin{ex} As a simple application, %
we recover the motive with compact support of the blowup at $0$ of $\C^n$. Indeed, this is a %
cellular variety, with one cell in each dimension $i$, for $1\leq i \leq n$, so the motive with compact support is
\[M^c(\mathrm{Bl}_0(\C^n))\cong \bigoplus_{i=1}^n \Z\{i\}.\]
We could deduce this using more elementary properties: the $\mathbb{A}^1$-bundle map $ \mathrm{Bl}_0(\C^n)\to \bbP^{n-1}$ induces an isomorphism $M^c(\bbP^{n-1})\{1\}\to M^c(\Bl_0(\C^n))$, and $M^c(\bbP^{n-1})\{1\}\cong \bigoplus_{i=1}^n \Z\{i\}$.
\end{ex}

\begin{rmk} We could relax the assumptions in \cref{lem-mothom}.(\ref{item-dos}): indeed, we could ask for the diagram to be a homotopy pushout square of topological spaces with all maps proper and all varieties cellular, except possibly $X$. The same proof would apply, \emph{mutatis mutandis}.
\end{rmk}

\section{Applications}

We will now apply \cref{lem-mothom}.(\ref{item-mot2}) to different types of varieties: rational curves, toric surfaces, and some higher-dimensional toric varieties.

\subsection{Rational curves} 

By a \emph{curve} we mean a projective variety of dimension 1.

\begin{prop}\label{prop-curve}Let $C$ be a rational curve. Then there is an isomorphism in $DM$
\[M(C)\cong \Z \oplus \Z^{|E|-|Z|}[1] \oplus \Z\{1\},\]
where $Z$ is the singular locus of $C$ and $E$ is the exceptional locus of the resolution of singularities given by normalization.
\end{prop}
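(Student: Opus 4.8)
The plan is to apply \cref{lem-mothom}, specifically parts (\ref{item-mot2}) and (3), to the normalization map, which furnishes the required cellular resolution. First I would verify the hypotheses of the theorem. Since $C$ is a curve, its normalization $f\colon \widetilde C\to C$ is a resolution of singularities (for curves, normalization resolves all singularities), and since $C$ is an irreducible rational curve, $\widetilde C$ is the smooth projective model of $\C(t)$, so $\widetilde C\cong \bbP^1$, which is cellular by \cref{ex-cellular}(2). The singular locus $Z$ is a finite set of points, hence cellular by \cref{ex-cellular}(1), and the exceptional locus $E=f^{-1}(Z)$ is likewise finite (as $f$ is a finite, hence proper, map), so it is cellular as well. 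Thus $C$ admits a cellular resolution of singularities, and since $C$ is complete I may work with $M$ and ordinary homology $H$ throughout by part (3).

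Next I would compute the homology of the three pieces: $H_0(\widetilde C)\cong H_2(\widetilde C)\cong \Z$ with $H_1(\widetilde C)=0$, while $H_0(Z)\cong \Z^{|Z|}$ and $H_0(E)\cong \Z^{|E|}$ with all higher homology vanishing for the finite sets $Z$ and $E$. Feeding these into the long exact sequence of \cref{lem-mv} yields $H_2(C)\cong \Z$ from the top of the sequence, and in low degrees the four-term exact sequence
\[0\to H_1(C)\to \Z^{|E|}\to \Z^{|Z|}\oplus \Z \to \Z \to 0,\]
where I use $H_0(C)\cong \Z$ since $C$ is connected. From this sequence $H_1(C)$ embeds into the free group $\Z^{|E|}$, hence is itself free, and an alternating-sum-of-ranks computation pins its rank to $|E|-|Z|$; therefore $H_1(C)\cong \Z^{|E|-|Z|}$.

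Finally, since $H_0(C)$ and $H_2(C)$ are both free, the hypothesis of \cref{lem-mothom}(\ref{item-mot2}) is satisfied, and the formula there gives
\[M(C)\cong H_0(C)\oplus H_1(C)[1]\oplus H_2(C)\{1\} \cong \Z\oplus \Z^{|E|-|Z|}[1]\oplus \Z\{1\},\]
using $A\{0\}=A$ and $H_3(C)=0$. This is exactly the claimed description.

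The main obstacle, modest as it is, lies in the bookkeeping of the long exact sequence needed to identify $H_1(C)$: one must separately establish its \emph{freeness}, via the injection into $\Z^{|E|}$, and its \emph{rank} $|E|-|Z|$, via the Euler characteristic of the four-term sequence. One should also take care that ``rational curve'' is understood to mean an irreducible curve, so that $\widetilde C\cong \bbP^1$ and both $H_0(C)$ and $H_0(\widetilde C)$ are $\Z$; permitting reducible curves with rational components would alter both the bookkeeping and the shape of the final answer.
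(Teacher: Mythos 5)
Your proof is correct and follows essentially the same route as the paper: identify $\widetilde C\cong\bbP^1$ via the normalization, note that $Z$, $E$, and $\bbP^1$ are cellular, extract $H_1(C)\cong\Z^{|E|-|Z|}$ and $H_2(C)\cong\Z$ from the Mayer--Vietoris-type long exact sequence, and apply \cref{lem-mothom}. Your closing remark about irreducibility/connectedness is a reasonable caveat that the paper leaves implicit.
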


\begin{proof} Since $C$ is a curve, the normalization map $\widetilde C\to C$ is simultaneously a resolution of singularities. In particular, it is birational. %
Since $C$ is rational, there is a birational map from $\bbP^1$ to $C$. 
Therefore, $\widetilde C$ and $\bbP^1$ are birationally equivalent. But smooth projective birationally equivalent curves are isomorphic, %
so $\widetilde C\cong \bbP^1$. In conclusion, the resolution of singularities square looks like this:
\[\begin{tikzcd}
	E & {\bbP^1} \\
	Z & C
	\arrow["j", hook, from=1-1, to=1-2]
	\arrow["p"', from=1-1, to=2-1]
	\arrow[from=1-2, to=2-2]
	\arrow[hook, from=2-1, to=2-2]
\end{tikzcd}\]
where $E$, $Z$ are subvarieties of dimension zero, so both they and of course $\bbP^1$ are cellular.  %
The long exact homology sequence gives
\[0\to H_1(C)\to H_0(E)\stackrel{\alpha}{\to} H_0(Z)\oplus H_0(\bbP^1)\to H_0(C)\to 0.\]
We have $H_0(C)\cong \Z$. %
Identifying the homologies, we have $\alpha:\Z^{|E|}\to \Z^{|Z|+1}$, a morphism of free abelian groups with cokernel $\Z$, so its kernel is $\Z^{|E|-|Z|}\cong H_1(C)$. %

Looking higher up the long exact homology sequence, we see that $H_2(C)\cong \Z$. We are in the conditions of  \cref{lem-mothom}.(\ref{item-mot3}): applying it gives the result.
\end{proof}

\begin{rmk} \begin{enumerate}
\item The number of inverse images of $x\in C$ under the normalization map is the number of branches of $C$ at $x$. %
Thus, we may equivalently describe $|E|$ as the sum of the number of branches of $C$ at its singular points.
\item The motive $M(C)$ is pure Tate if and only if $|E|=|Z|$, i.e. all the singularities of $C$ are unibranch, or in other words, $H_1(C)=0$. Equivalently, $C$ is a fake projective line, i.e. it has the same homology as $\bbP^1$. We see that fake projective lines which are rational curves have the same motive as $\bbP^1$.
\item If $C$ is a projective toric curve (not necessarily normal), %
then it is rational and the theorem applies.%

\end{enumerate}
\end{rmk}

\begin{ex}
The projective toric curve $C=\Proj(\C[x,y,z]/(y^2z-x^3))$ has as unique singularity a cusp at the origin, %
so it is unibranch. Therefore, $M(C)\cong \Z\oplus \Z\{1\}$.
\end{ex}

\begin{ex}
The nodal projective curve $C=\Proj(\C[x,y,z]/(y^2z-x^3-x^2z))$ is also toric. It has a unique singularity with two branches. %
Thus, $H_1(C)\cong \Z$, and $M(C)\cong \Z \oplus \Z[1]\oplus \Z\{1\}$.%
\end{ex}

\begin{ex} For a non-toric example, we can take the %
ampersand curve,  %
i.e. the quartic with equation $(y^2-x^2)(x-1)(2x-3)=0$. 
It has three singularities, each of them with two branches, so $M(C)\cong \Z\oplus \Z^3[1]\oplus \Z\{1\}$.
\end{ex}

\begin{ex} For any $m\geq n\geq 1$ we can construct a rational curve $C$ with $|Z|=n$ and $|E|=m$. Since the structure of the singularities is not relevant for the motive, we can take them to be ordinary: we just define $C$ to be the pushout in schemes of $\Spec(\C)^{\sqcup n}\leftarrow \Spec(\C)^{\sqcup m} \to \bbP^1$, where the first map is any surjective map.
The pushout exists by \cite[3.9]{schwede}.
\end{ex}

\subsection{Cellular resolutions of singularities of toric varieties}
We now consider the question of the existence of a cellular resolution of singularities for normal toric varieties. Let us start by observing that normal projective toric surfaces always admit one.

\begin{prop} \label{prop-trees} Let $\Sigma$ be a fan in $N_\R\cong \R^2$. Let $\widetilde \Sigma$ be a smooth fan refining $\Sigma$ by adding rays %
such that the associated toric morphism $\xs\to X_\Sigma$ is a resolution of singularities with exceptional locus $E$. Then
the variety $E$ is a disjoint union of trees of projective lines, where by  ``tree'' we mean an intersection of finitely many copies of $\bbP^1$, where the first intersects the second transversally in a point, the second intercepts the third transversally in another point, etc., and those are the only intersections between them. %
In particular, $E$ is cellular.%
\end{prop}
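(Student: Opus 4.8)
The plan is to reduce the statement to the combinatorics of how a singular two-dimensional cone gets subdivided, and then read off both the chain structure and the cellularity. First I would recall the local picture of a toric surface resolution (\cite{cox}): since $\widetilde\Sigma$ refines $\Sigma$ with the same support and only adds rays, every ray of $\widetilde\Sigma$ not already in $\Sigma$ lies in the interior of a unique two-dimensional cone $\sigma=\langle u_1,u_2\rangle$ of $\Sigma$; and because $\xs\to X_\Sigma$ is a resolution of singularities, it is an isomorphism over the smooth locus, so these new rays occur precisely inside the singular (non-smooth) cones. Fixing such a $\sigma$, I would order its rays angularly as $u_1=v_0,v_1,\dots,v_r,v_{r+1}=u_2$, where $v_1,\dots,v_r$ are the new rays and the two-dimensional cones of the induced subdivision are exactly the $\langle v_\ell,v_{\ell+1}\rangle$ for $0\leq \ell\leq r$.

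Next I would identify the exceptional locus. The locus $E$ is the union of the torus-invariant curves $D_\rho=V(\rho)$ attached to the new rays $\rho$: over the fixed point $O_\sigma$ of a singular cone the fibre of $\xs\to X_\Sigma$ is such a union of curves, while away from them the map is a local isomorphism. Since each new ray is interior to $\sigma$, it is a wall between the two adjacent cones $\langle v_{\ell-1},v_\ell\rangle$ and $\langle v_\ell,v_{\ell+1}\rangle$ of $\widetilde\Sigma$, so the corresponding curve is complete and hence $D_{v_\ell}\cong\bbP^1$ (\cite{cox}). Toric intersection theory on the smooth surface $\xs$ then gives that $D_{v_\ell}\cap D_{v_m}\neq\emptyset$ if and only if $v_\ell,v_m$ span a two-dimensional cone of $\widetilde\Sigma$, in which case they meet transversally in the single fixed point of that cone (the cone being smooth, the intersection number is $1$). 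Within $\sigma$ this happens exactly when $|\ell-m|=1$, so $D_{v_1},\dots,D_{v_r}$ form a chain meeting end to end, which is the tree described in the statement; the endpoints also meet $D_{v_0}$ and $D_{v_{r+1}}$, but these are strict transforms of old rays and are not part of $E$. Finally, new rays belonging to different cones of $\Sigma$ never span a common two-dimensional cone of $\widetilde\Sigma$, so curves coming from distinct singular cones are disjoint; thus $E$ is the disjoint union over the singular cones of these chains.

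It remains to observe that such a chain is cellular, which I would do by induction on the number $r$ of components. A single $\bbP^1$ is cellular via $\{\mathrm{pt}\}\subset\bbP^1$ with complement $\A^1$. Given a chain $C_1\cup\cdots\cup C_r$ with each $C_\ell\cap C_{\ell+1}$ a single point, the subchain $Y=C_1\cup\cdots\cup C_{r-1}$ is closed and cellular by induction, and $C_r\setminus Y=C_r\setminus\{\mathrm{pt}\}\cong\A^1$; appending $X=Y\cup C_r$ to a cellular filtration of $Y$ (each stratum of which remains closed in $X$) exhibits $X$ as cellular. A finite disjoint union of cellular varieties is cellular, so $E$ is cellular.

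Setting the routine toric bookkeeping aside, the step I expect to require the most care is pinning down that the exceptional locus is exactly the union of the new invariant divisors together with the precise pairwise intersection pattern — that is, that the angular ordering of the rays in a subdivided cone forces a linear chain rather than some more complicated configuration. Once this Hirzebruch--Jung picture is in place, the $\bbP^1$-identification, the transversality, and the cellularity are all immediate.
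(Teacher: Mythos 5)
Your argument is correct and follows essentially the same route as the paper, which simply cites the standard Hirzebruch--Jung resolution picture (\cite[Page 47]{fulton-toric}, \cite[11.1.10, 11.2.2]{cox}) for the identification of $E$ as a disjoint union of chains of $\bbP^1$'s attached to the new rays interior to the singular cones. The only difference is that you spell out the details the paper delegates to references, and you additionally make explicit the inductive filtration showing a chain of $\bbP^1$'s is cellular, which the paper leaves as an evident remark.
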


\begin{proof}
First of all, such a refinement exists \cite[10.1.10]{cox}. The exceptional locus $E$ corresponds to the rays added to $\Sigma$. All the rays added to one of the cones in $\Sigma$ generate one such tree of projective lines: this is %
\cite[Page 47]{fulton-toric};
see also \cite[11.1.10, 11.2.2]{cox}. Considering all the rays added to all the cones gives then such a disjoint union of trees. \qedhere
\end{proof}

\begin{rmk} \label{rmk-homE}We have $H_0(E)=\Z^s$, where $s$ is the number of singular cones of dimension 2 in $\Sigma$. To compute $H_2(E)$, we can proceed as follows. The variety $E$ can be presented as an iterated (homotopy) pushout: The (homotopy) pushout of $\bbP^1 \leftarrow \Spec(\C) \to \bbP^1$ gives a tree of two projective lines, to which we can add another line via another (homotopy) pushout, etc. A Mayer--Vietoris argument then proves that  $H_2(E)=\Z^k$, where $k$ is the number of rays added to $\Sigma$. Therefore, \cref{lem-mothom} gives that $M(E)=\Z^s\oplus \Z\{1\}^k$. %
\end{rmk}

\begin{prop} \label{rmk-surface-cellular} Any normal projective toric surface $X$ admits a cellular resolution of singularities. 
\begin{proof}
The singular locus consists of finitely many points, %
and as observed in \cref{prop-trees}, we can find a resolution $\widetilde X\to X$ with exceptional locus cellular; $\widetilde X$ is smooth projective toric, hence also cellular.
\end{proof}
\end{prop}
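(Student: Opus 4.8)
The plan is to verify, one by one, the three conditions packaged into the definition of a cellular resolution of singularities: that the singular locus $Z$ is cellular, that there exists a resolution $\widetilde X \to X$ whose exceptional locus is cellular, and that $\widetilde X$ itself is cellular. The first two conditions follow quickly from what we have already established; the genuine content lies in the third.

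First I would identify the singular locus. Writing $\Sigma$ for the fan of $X$ in $N_\R \cong \R^2$, the affine chart $U_\sigma$ is smooth whenever $\dim\sigma \leq 1$ (the cone $\{0\}$ gives the torus, and a ray gives $\C\times\C^*$), while a two-dimensional cone $\sigma$ contributes a single torus-fixed point $\gamma_\sigma$, which is singular precisely when $\sigma$ is not generated by a lattice basis. Hence $Z$ is the finite set of such fixed points attached to the non-smooth maximal cones, a zero-dimensional variety, and therefore cellular by \cref{ex-cellular}(1). For the resolution I would invoke \cref{prop-trees}: since $X$ is projective, $\Sigma$ is complete, and \cite[10.1.10]{cox} furnishes a smooth refinement $\widetilde\Sigma$, obtained by adding rays, whose associated toric morphism $\widetilde X = \xs \to X$ is a resolution of singularities with exceptional locus $E$ a disjoint union of trees of projective lines, hence cellular.

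It remains to show that $\widetilde X$ is cellular, and this is the step I expect to be the main obstacle. Smoothness is immediate, since $\widetilde\Sigma$ is a smooth fan; and because refining a fan does not change its support, $\widetilde\Sigma$ is again complete, so $\widetilde X$ is complete. To deduce cellularity from \cref{ex-cellular}(2) I would need $\widetilde X$ to be projective, which is \emph{not} automatic for complete toric varieties in higher dimension. The key point I would use is the classical low-dimensional phenomenon that every smooth complete toric surface is projective; granting this, $\widetilde X$ is smooth projective toric and hence cellular by the Białynicki-Birula decomposition. Alternatively, if one prefers to sidestep the projectivity fact, one could exhibit directly a $\C^*$-action on the complete variety $\widetilde X$ with finitely many fixed points — for which the required limits exist automatically by completeness — and conclude cellularity via \cref{ex-cellular}(\ref{item-cellular-3}). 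Either route disposes of the last condition, and with all three verified, $X$ admits a cellular resolution of singularities.
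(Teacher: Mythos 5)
Your proposal is correct and follows the same route as the paper: the singular locus is a finite set of torus-fixed points (hence cellular), the exceptional locus of the toric resolution is cellular by \cref{prop-trees}, and $\widetilde X$ is smooth projective toric, hence cellular. The one point where you add substance is the justification that $\widetilde X$ is projective: the paper asserts this without comment (it follows from the later observation in \cref{rmk-cond}.(\ref{item-quasiproj}) that a refinement of a projective fan yields a projective toric variety), whereas you invoke the classical fact that every smooth complete toric surface is projective --- both are valid. One small caveat on your ``alternative'' route: \cref{ex-cellular}.(\ref{item-cellular-3}) assumes quasiprojectivity, which is needed to ensure the Białynicki-Birula decomposition is filtrable (this can fail for merely complete smooth varieties), so that route does not actually sidestep the projectivity question; for a complete surface quasiprojective and projective coincide, so the point is moot here but worth keeping in mind.
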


If we take a fan $\Sigma$ and refine it to a fan $\widetilde\Sigma$ so as to obtain the toric resolution of singularities $\xs\to X_\Sigma$, then if $\xs$ is not projective, it may not be cellular. Let us now state a lemma that guarantees that it will be cellular under more general hypotheses than completeness of the fan:

\begin{lemma} \label{lemma-oneparam} %
Let $\widetilde\Sigma$ be a smooth fan in $N_\R$. Suppose $\xs$ is quasiprojective. %
If there exists a $u\in N$ such that:
\begin{enumerate}[a)]
\item \label{cond-supp} $|\widetilde\Sigma|$ contains $|\widetilde\Sigma|+u$, where $|\widetilde\Sigma|$ denotes the support of $\widetilde\Sigma$, 
\item \label{cond-mingen} $\langle m_i,u \rangle\not=0$ for all $i$, where $\{m_i\}_i\subset M$ is a set consisting of semigroup generators of all the $\sigma^\vee\cap M$, for all the maximal cones $\sigma\in \widetilde\Sigma$,
\end{enumerate}
then $\xs$ is cellular.
\end{lemma}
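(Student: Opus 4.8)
The plan is to produce a suitable $\C^*$-action on $\xs$ and invoke the Białynicki--Birula criterion recorded in \cref{ex-cellular}.(\ref{item-cellular-3}). Since $\widetilde\Sigma$ is smooth, $\xs$ is a smooth variety, and it is quasiprojective by hypothesis; thus it suffices to exhibit an action of $\C^*$ with finitely many fixed points for which every limit $\lim_{t\to 0}t\cdot x$ exists in $\xs$. The action I would use is the one induced by the one-parameter subgroup $\lambda^u\colon\C^*\to T_N$ determined by $u\in N$, acting on $\xs$ through the torus. In the affine chart $U_\sigma=\Spec\C[\sigma^\vee\cap M]$ attached to a maximal cone $\sigma$, this action scales the coordinate $\chi^{m}$ by $t^{\langle m,u\rangle}$, and both hypotheses will be read off directly from this description.

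First I would establish finiteness of the fixed locus from (\ref{cond-mingen}). Covering $\xs$ by the charts $U_\sigma$ for $\sigma$ maximal, smoothness gives $U_\sigma\cong\C^d\times(\C^*)^{n-d}$, where $d=\dim\sigma$ and $n$ is the rank of $N$, the coordinates being the $\chi^{m_i}$ for the semigroup generators $m_i$ of $\sigma^\vee\cap M$. A point of $U_\sigma$ can be fixed by $\lambda^u$ only if every coordinate on which $\lambda^u$ acts nontrivially vanishes; but the invertible coordinates on the $(\C^*)^{n-d}$ factor can never vanish. Since (\ref{cond-mingen}) guarantees $\langle m_i,u\rangle\neq 0$ for every generator $m_i$, a chart with $d<n$ contains no fixed point at all, while a full-dimensional chart has exactly the distinguished torus-fixed point $\gamma_\sigma$ as its only fixed point. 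Hence the fixed locus is the finite set of $\gamma_\sigma$ for full-dimensional $\sigma\in\widetilde\Sigma$.

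Next I would show that (\ref{cond-supp}) forces every limit to exist. For $x$ in an orbit $O(\tau)$, the orbit closure $V(\tau)=\overline{O(\tau)}$ is the toric variety of the star fan $\mathrm{Star}(\tau)$ in $N(\tau)_\R=N_\R/\mathrm{span}(\tau)$, on whose dense torus $O(\tau)$ the group $\lambda^u$ acts through the image $\bar u$ of $u$; by the standard existence criterion for limits of one-parameter subgroups (\cite[\S3.2]{cox}), $\lim_{t\to 0}\lambda^u(t)\cdot x$ exists in $V(\tau)\subseteq\xs$ once $\bar u\in|\mathrm{Star}(\tau)|$. To check this, fix $v$ in the relative interior of $\tau$ and apply (\ref{cond-supp}) to $Mv\in|\widetilde\Sigma|$ for a large integer $M$, obtaining $Mv+u\in|\widetilde\Sigma|$ and hence $v+\tfrac{1}{M}u\in|\widetilde\Sigma|$ after rescaling by $1/M$. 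For $M$ large this point lies close to $v\in\mathrm{relint}(\tau)$, hence in $\bigcup_{\sigma\succeq\tau}\sigma$, since in a fan any cone meeting $\mathrm{relint}(\tau)$ has $\tau$ as a face. Projecting to $N(\tau)_\R$ sends $v+\tfrac{1}{M}u$ to $\tfrac{1}{M}\bar u$, which is therefore in $|\mathrm{Star}(\tau)|$; as this set is a cone, $\bar u\in|\mathrm{Star}(\tau)|$.

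With finiteness of fixed points and existence of all limits in hand, \cref{ex-cellular}.(\ref{item-cellular-3}) applies and yields that $\xs$ is cellular. I expect the limit step to be the main obstacle: the fixed-point count is a direct coordinate computation, whereas existence of limits requires the reduction to orbit closures via star fans together with the translation-and-rescaling argument that converts the combinatorial hypothesis (\ref{cond-supp}) into the membership $\bar u\in|\mathrm{Star}(\tau)|$ for every cone $\tau$.
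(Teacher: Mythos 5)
Your proof is correct and follows essentially the same route as the paper: act via the one-parameter subgroup $\lambda^u$, deduce existence of all limits from (a) and finiteness of the fixed locus from (b), and conclude by the Białynicki--Birula criterion of \cref{ex-cellular}.(\ref{item-cellular-3}). The only difference is that the paper simply cites the two key equivalences (condition (a) $\Leftrightarrow$ existence of limits from Nicaise--Payne, and condition (b) $\Leftrightarrow$ regularity of $\lambda^u$ from Carrell), whereas you prove both directly --- the chart computation for fixed points and the star-fan/rescaling argument for limits are both sound and make the lemma self-contained.
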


\begin{proof}Denote by $T_N$ the torus of $\xs$. For any $u\in N$, consider the one-parameter subgroup $\lambda^u:\C^*\to T_N$ it defines, which gives a $\C^*$-action on $\xs$. %

Condition \ref{cond-supp}) is equivalent to the limits $\lim_{t\to 0} t\cdot x$ existing for all $x\in \xs$, where $t\cdot x$ denotes the $\C^*$-action above \cite[5.2.1]{nicaise-payne}. %

Condition \ref{cond-mingen}) is equivalent to the one-parameter subgroup being \emph{regular}, which means that the fixed point set of the action of $T_N$ on $\xs$ coincides with that of its one-parameter subgroup \cite[4.2.5]{carrell}. %
Since $T_N$ fixes only finitely many points, this guarantees that its one-parameter subgroup does so, too, and thus we are in the conditions of \cref{ex-cellular}.(\ref{item-cellular-3}).
\end{proof}

\begin{rmk} \label{rmk-cond}
\begin{enumerate}
\item \label{item-quasiproj}The toric variety of a full-dimensional lattice polyhedron is quasiprojective \cite[7.1.10]{cox}. %
If $\widetilde\Sigma$ is the refinement of some fan $\Sigma$ and $X_\Sigma$ %
is quasiprojective (resp. projective), then $\xs$ is also quasiprojective (resp. projective), by \cite[7.16]{hartshorne} and \cite[13.102]{gortz}. %
\item Condition \ref{cond-supp}) is satisfied for any $u\in |\Sigma|\cap N$ if $|\Sigma|$ is convex: for example, if $\Sigma$ is generated by a single one-dimensional cone, so that the toric variety is affine. %
\item The fan $\Sigma$ generated by $\cone(e_1,-e_1+e_2)$ and $\cone(e_1,-e_1-e_2)$ in $\R^2$ is not convex but it satisfies \ref{cond-supp}): the vectors that satisfy it are those in $\cone(e_1+e_2,e_1-e_2)$ which is the image of the ``missing cone'' $\R^2\setminus \int(|\Sigma|)$ under the central symmetry at the origin, where $\int$ denotes the topological interior.
\item An example of a fan that does not satisfy Condition \ref{cond-supp}) for any $u\in N$ is the fan generated by the cones $\cone(e_1,e_2)$ and $\cone(-e_1,-e_2)$ in $\R^2$.  %
Therefore, this toric variety does not admit any action of a one-parameter subgroup for which the limits $\lim_{t\to 0}t\cdot x$ exist for all $x\in X$.
\item We now give an example of a fan that satisfies Condition \ref{cond-supp}) for some $u$, but does not satisfy Condition \ref{cond-mingen}) for any vector.  %
Take the fan $\Sigma$ in $\R^3$ generated by the two cones $\sigma_1=\cone(-e_1+e_2+e_3, -e_1-e_2+e_3,e_3)$ and $\sigma_2=\cone(e_1+e_2+e_3,e_1-e_2+e_3,e_3)$. Then the only $u\in N$ that satisfy Condition $a)$ are those in $L\coloneqq \cone(e_3)\cap N$. Indeed, a reformulation of  Condition $a)$ is that $|\Sigma|$ is star-shaped around $u$ (\cite[5.2.1]{nicaise-payne}), but this only happens for the vectors in $L$. We find that $\sigma_1^\vee=\cone(e_1^*+e_3^*,-e_1^*+e_2^*, -e_1^*-e_2^*)$. 
The second covector, for example, is perpendicular to $\cone(e_3)$, so no vector in $L$ can satisfy Condition $b)$.
\item \label{cond-auto}If there exists a cone of maximal dimension in $N_\R$ consisting of vectors satisfying Condition $a)$, then there exists a vector $u\in N$ satisfying both $a)$ and $b)$. Indeed, the vectors that do not satisfy $b)$ are a finite union of hyperplanes: they cannot cover an entire maximal cone. In particular, if  the support $|\Sigma|$ is convex, then there exists a vector $u\in N$ satisfying both conditions. In particular, smooth affine toric varieties are cellular.%
\item By the same reasoning, if the region $\nu\coloneqq N_\R\setminus  \int(|\Sigma|)$ consists of a cone of maximal dimension, then there exists a vector $u\in N$ satisfying both conditions. In this case, it is not hard to see that Condition a) is satisfied for the maximal cone of vectors which are in $c(\nu)$, where $c$ is the central symmetry with respect to the origin.
\end{enumerate}
\end{rmk}

\begin{prop} \label{prop-affinesurface}Any normal affine toric surface $X$ admits a cellular resolution of singularities.
\begin{proof} Suppose $X$ is the toric variety of the cone $\sigma$ in $N_\R\cong \R^2$. The singular locus of $X$ consists of finitely many points, %
and as recalled in \cref{prop-trees}, we can refine $\sigma$ to a fan $\widetilde\Sigma$ so that the resolution of singularities $\xs\to X$ has cellular exceptional locus. We now apply the previous lemma. We use \cref{rmk-cond}: by (\ref{item-quasiproj}), $\xs$ is quasiprojective, so since $|\sigma|=|\widetilde\Sigma|$ is convex, we have that $\xs$ is cellular by (\ref{cond-auto}).
\end{proof}
\end{prop}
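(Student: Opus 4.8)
The plan is to verify directly the three requirements in the definition of a cellular resolution of singularities, namely that the singular locus of $X$, the exceptional locus $E$ of the chosen resolution, and the resolution $\widetilde X$ itself are each cellular. Write $X=U_\sigma$ for a cone $\sigma$ in $N_\R\cong\R^2$. Because $X$ is a surface, the orbit–cone correspondence tells us that the only possible singularity is the torus-fixed point attached to a full-dimensional $\sigma$ (the rays, being one-dimensional cones, are always smooth); hence the singular locus is either empty or a single point, and in either case it is a zero-dimensional variety, so it is cellular by \cref{ex-cellular}.

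For the resolution itself, I would invoke the surface case already recorded in \cref{prop-trees}: refine $\sigma$ to a smooth fan $\widetilde\Sigma$ by inserting rays, so that the induced toric morphism $\xs\to X$ is a resolution of singularities whose exceptional locus $E$ is a disjoint union of trees of projective lines, and is therefore cellular. This reduces the whole statement to the single remaining, and genuinely nontrivial, point: that the smooth toric surface $\xs$ is itself cellular.

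This is where \cref{lemma-oneparam} does the work. Since $X=U_\sigma$ is affine it is quasiprojective, and hence so is its refinement $\xs$ by \cref{rmk-cond}.(\ref{item-quasiproj}); moreover $|\widetilde\Sigma|=|\sigma|$ is a single cone and thus convex. I would then appeal to \cref{rmk-cond}.(\ref{cond-auto}): convexity of the support makes Condition a) of \cref{lemma-oneparam} hold for every lattice vector in the cone, while the vectors violating Condition b) lie in a finite union of hyperplanes and so cannot fill out a full-dimensional cone. Consequently a vector $u\in N$ satisfying both conditions exists, and \cref{lemma-oneparam} yields that $\xs$ is cellular, completing all three checks.

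The main obstacle is precisely the cellularity of $\xs$: unlike the projective case of \cref{rmk-surface-cellular}, the resolution of an affine surface need not be complete, so there is no Białynicki-Birula decomposition available for free, and one must instead exhibit a $\C^*$-action whose limits $\lim_{t\to 0}t\cdot x$ all exist and whose fixed locus is finite. The real content of the argument is therefore funneling the convexity of the single cone $\sigma$ into the two conditions of \cref{lemma-oneparam}; once that is in place, the three cellularity statements assemble immediately into the definition of a cellular resolution of singularities.
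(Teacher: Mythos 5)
Your proposal is correct and follows essentially the same route as the paper: the singular locus is a finite set of points, \cref{prop-trees} supplies a resolution with cellular exceptional locus, and the cellularity of $\xs$ comes from quasiprojectivity plus convexity of $|\widetilde\Sigma|=|\sigma|$ via \cref{rmk-cond}.(\ref{cond-auto}) and \cref{lemma-oneparam}. You merely spell out a few steps the paper leaves implicit (why the singular locus is zero-dimensional, why the hyperplane argument in \cref{rmk-cond}.(\ref{cond-auto}) produces a suitable $u$), which is fine but not a different argument.
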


\subsection{Motives of toric surfaces} We can now describe the motives of projective normal toric surfaces, and the motive with compact support of many other normal toric surfaces, notably all the affine ones. We begin by recalling the description of the Borel--Moore homology of normal toric surfaces.

\begin{defn} The \emph{index} of a fan in $N_\R\cong \R^2$ with $r$ rays is the greatest common divisor of the determinants $\det(u_i,u_j)$, $1\leq i<j\leq r$, where the $u_i$ are the minimal ray generators of the fan.
\end{defn}

\begin{rmk} Alternatively, the index of a fan is the index of the lattice spanned by the $u_i$ inside $N\cong \Z^2$. %
\end{rmk}

\begin{prop} \label{prop-homology-jordan}Let $\Sigma$ be a fan in $N_\R \cong \R^2$. The Borel--Moore homology of $X_\Sigma$ is:
\[
\hbm_0(X_\Sigma)=\begin{cases}\Z & \text{if }\Sigma \text{ is complete,} \\ 0 & \text{else,}\end{cases} \hspace{.5cm}
\hbm_1(X_\Sigma)=\begin{cases}0 & \text{if }\Sigma \text{ is complete,} \\ \Z^{-d_2+d_1-d_0} & \text{else,}\end{cases} \]
\[
\hbm_2(X_\Sigma)=\Z^{d_1-s}\oplus \Z/m, \hspace{.5cm}
\hbm_3(X_\Sigma)=\Z^{2-s}, \hspace{.5cm}
\hbm_4(X_\Sigma)=\Z, \hspace{.5cm}
\]
where $d_i$ is the number of cones of dimension $i$, %
$s$ is the dimension of the subspace of $N_\R$ spanned by the rays and $m$ is the index of $\Sigma$.
\begin{proof} This follows from the toric spectral sequence of \cite{jordan}, as explained in Theorem 3.4.2 therein.
\end{proof}
\end{prop}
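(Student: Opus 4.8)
The plan is to compute $\hbm_*(X_\Sigma)$ directly by running the spectral sequence of the orbit stratification, which is the concrete form of the toric spectral sequence of \cite{jordan}; carrying it out by hand is what makes the index $m$ visible. By the orbit--cone correspondence \cite{cox}, $X_\Sigma$ is the disjoint union of its orbits $O_\sigma\cong(\C^*)^{2-\dim\sigma}$, and the union $X_{(p)}$ of orbits of dimension $\le p$ is closed, because orbit closures only add orbits of strictly smaller dimension. This gives a filtration $X_{(0)}\subset X_{(1)}\subset X_{(2)}=X_\Sigma$ by closed subvarieties whose strata are: the $d_2$ fixed points (one per two-dimensional cone); the $d_1$ one-dimensional orbits $O_\rho\cong\C^*$ (one per ray); and the big torus $T_N\cong(\C^*)^2$. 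The Borel--Moore localization sequences of this filtration assemble into a spectral sequence $E^1_{p,q}=\hbm_{p+q}(X_{(p)}\setminus X_{(p-1)})\Rightarrow\hbm_{p+q}(X_\Sigma)$ whose $d^1$ is the connecting map.

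First I would write out the $E^1$-page. Since $\hbm_k(\C^*)=\Z$ for $k\in\{1,2\}$, the Künneth formula gives $\hbm_{p+q}((\C^*)^p)\cong\Lambda^q N_O$, where $N_O$ is the rank-$p$ cocharacter lattice of the orbit $O$. Hence the nonzero entries are exactly $E^1_{0,0}=\Z^{d_2}$; $E^1_{1,0}=\Z^{d_1}$ and $E^1_{1,1}=\bigoplus_\rho N(\rho)\cong\Z^{d_1}$; and $E^1_{2,0}=\Z$, $E^1_{2,1}=N$, $E^1_{2,2}=\Lambda^2 N\cong\Z$, where $N(\rho):=N/(N\cap\R\rho)$. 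For bidegree reasons there are no higher differentials — the $p=0$ column vanishes for $q>0$, killing all $d^{\ge 2}$ — and at most one term survives in each total degree, the sole exception being degree $2$, where both $E_{2,0}$ and $E_{1,1}$ a priori contribute. So the problem reduces to identifying the three row-complexes ($q=0,1,2$) and taking their homology.

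The crux is to identify the $d^1$-maps as lattice maps. Whenever an orbit $O'$ of dimension $p-1$ lies in the closure of an orbit $O$ of dimension $p$, the corresponding connecting map is, up to an incidence sign, the map $\Lambda^q N_O\to\Lambda^q N_{O'}$ induced by the lattice quotient $N_O\twoheadrightarrow N_{O'}$; this I would verify on the local models $\C^*\subset\C$ and $\C^*\times\C^*\subset\C\times\C^*$, where the relevant connecting homomorphisms are immediately seen to be isomorphisms. It follows that the row $q=0$ is $\Z\xrightarrow{\mathrm{diag}}\Z^{d_1}\xrightarrow{\partial}\Z^{d_2}$, with $\partial$ the incidence map sending a ray to the signed sum of the fixed points in its closure; the row $q=1$ is a single map $\phi\colon N\to\bigoplus_\rho N(\rho)\cong\Z^{d_1}$; and the row $q=2$ is just $\Z$. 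Using that $u_\rho$ is primitive, the isomorphism $N(\rho)\cong\Z$, $v\mapsto\det(u_\rho,v)$, turns $\phi$ into $v\mapsto(\det(u_\rho,v))_\rho$.

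Finally I would read off the homology. The row $q=2$ yields $\hbm_4=\Z$. In the row $q=1$, the rank of $\phi$ equals $s=\dim\mathrm{span}\{u_\rho\}$, so $\hbm_3=\ker\phi\cong\Z^{2-s}$; and the Smith normal form of $\phi$ has first elementary divisor the $\gcd$ of all ray coordinates, which is $1$ by primitivity, while the product of its elementary divisors is the $\gcd$ of the $2\times2$ minors of $\phi$ — and these minors are precisely the determinants $\det(u_i,u_j)$. Thus $\coker\phi\cong\Z^{d_1-s}\oplus\Z/m$. In the row $q=0$, the map $\mathrm{diag}$ is injective, so $E^2_{2,0}=0$; there is therefore no extension problem in total degree $2$ and $\hbm_2=\coker\phi=\Z^{d_1-s}\oplus\Z/m$. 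The remaining entries come from $\partial$: one has $\hbm_0=\coker\partial=\hbm_0(X_\Sigma)$, which is $\Z$ when $X_\Sigma$ is compact (i.e. $\Sigma$ complete) and $0$ otherwise, while $\hbm_1=\ker\partial/\Im(\mathrm{diag})$ is free (the diagonal vector is primitive, hence spans a direct summand of the saturated group $\ker\partial$) of rank $d_1-d_2-d_0$, with $d_0=1$ counting the zero cone; this vanishes when $\Sigma$ is complete. The main obstacle is the third step: nailing the connecting maps with the correct incidence signs and the identification $N(\rho)\cong\Z$, precisely enough that the $2\times2$ minors of $\phi$ reproduce the $\det(u_i,u_j)$. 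Once that is secured, the index $m$ falls out of the Smith normal form and everything else is rank and extension bookkeeping.
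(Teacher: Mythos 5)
Your computation is correct and is precisely the paper's approach: the ``toric spectral sequence'' of Jordan's Theorem 3.4.2, which the paper invokes as a black box, is exactly the orbit-filtration spectral sequence you run, and your identifications of the $d^1$-differentials as lattice quotients and of $\coker\phi$ via Smith normal form (with $e_1=1$ from primitivity and $e_1e_2=\gcd\det(u_i,u_j)=m$) are the right ones. The only difference is that you carry out in full the computation the paper delegates to the citation.
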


\begin{rmk} If the fan $\Sigma$ in $N_\R\cong \R^2$ is non-degenerate, i.e. $s=2$, then we have that $\hbm_0(X_\Sigma)$ is free, $\hbm_3(X_\Sigma)=0$ and $\hbm_4(X_\Sigma)$ is free, so the conditions of \cref{lem-mothom}.(\ref{item-cofiber}) on the homology are satisfied.
\end{rmk}

\begin{cor}\label{rmk-homology-toric} Let $\Sigma$ be a complete fan in $N_\R \cong \R^2$. The homology of $X_\Sigma$ is concentrated in degrees $0,2,4$, where it takes the following values: $H_0(X_\Sigma)=H_4(X_\Sigma)=\Z$, and $H_2(X_\Sigma)=\Z^{r-2}\oplus \Z/m$,
for $r$ the number of rays in $\Sigma$ and $m$ the index of $\Sigma$.
\end{cor}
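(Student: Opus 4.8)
The plan is to deduce this directly from \cref{prop-homology-jordan} by specializing to the complete case and identifying Borel--Moore homology with singular homology.

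First I would note that a complete fan yields a complete, and hence compact, toric variety $X_\Sigma$, so that Borel--Moore homology agrees with ordinary singular homology: $\hbm_*(X_\Sigma) = H_*(X_\Sigma)$. This lets us simply read off the singular homology groups from the Borel--Moore groups already computed in \cref{prop-homology-jordan}, rather than doing any new computation.

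Next I would observe that for a complete fan in $N_\R\cong\R^2$ the minimal ray generators must span the whole plane, so that $s=2$ in the notation of \cref{prop-homology-jordan}. Indeed, if the rays all lay in a single line through the origin, then no strongly convex two-dimensional cone could be formed from them, and the support of $\Sigma$ would be contained in that line, contradicting completeness. With $s=2$ in hand, the formulas of \cref{prop-homology-jordan} collapse: one gets $\hbm_3(X_\Sigma)=\Z^{2-s}=0$ immediately, while completeness of $\Sigma$ gives $\hbm_0(X_\Sigma)=\Z$ and $\hbm_1(X_\Sigma)=0$, and $\hbm_4(X_\Sigma)=\Z$ holds unconditionally. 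Finally, since $d_1$ is by definition the number of one-dimensional cones, i.e. the number of rays $r$, the expression for $\hbm_2$ becomes $\Z^{d_1-s}\oplus\Z/m=\Z^{r-2}\oplus\Z/m$.

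Combining these via $H_*=\hbm_*$ then produces exactly the claimed values, concentrated in degrees $0,2,4$. I expect no genuine obstacle here, as the statement is a routine specialization; the only point deserving an explicit word is the equality $s=2$, which is the elementary convexity observation about complete fans in the plane recorded above.
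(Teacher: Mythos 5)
Your argument is correct and is exactly the intended deduction: the paper states this as an immediate corollary of \cref{prop-homology-jordan}, obtained by setting $s=2$ and $d_1=r$ for a complete fan and using that $\hbm_*=H_*$ on the compact variety $X_\Sigma$. Nothing further is needed.
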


We can now describe the motive of any projective normal toric surface:

\begin{prop} \label{thm-motive-surface} Let $\Sigma$ be a complete fan in $N_\R\cong \R^2$. Then
\[M(X_\Sigma)\cong \Z \oplus \Z^{r-2}\{1\} \oplus \Z/m\{1\} \oplus \Z\{2\},\]
where $r=|\Sigma(1)|$ and $m$ is the index in $\Z^2$ of the lattice spanned by the minimal generators of the rays of $\Sigma$.
\begin{proof}
As observed in \cref{rmk-surface-cellular}, $X_\Sigma$ admits a cellular resolution of singularities. The homology groups of $X_\Sigma$ are given in \cref{rmk-homology-toric}: since the odd ones are zero, we are in the conditions of \cref{lem-mothom}.(\ref{item-mot2}). The result follows. %
\end{proof}
\end{prop}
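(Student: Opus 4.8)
The plan is to recognize that, once the preparatory results are in hand, this proposition is an essentially immediate application of \cref{lem-mothom}. First I would record the geometric input: since $\Sigma$ is complete, $X_\Sigma$ is a complete normal toric surface, hence projective, so by \cref{rmk-surface-cellular} it admits a cellular resolution of singularities. This places us squarely in the setting of \cref{lem-mothom}.(2). Completeness of $\Sigma$ also lets me invoke part (3) of that theorem to replace $M^c$ by $M$ and $\hbm$ by ordinary singular homology $H$ throughout.

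Next I would verify the hypothesis of \cref{lem-mothom}.(\ref{item-mot2}). By \cref{rmk-homology-toric}, the homology of $X_\Sigma$ is concentrated in the even degrees $0,2,4$, so $H_{2i+1}(X_\Sigma)=0$ for every $i$. The point worth emphasizing is that the hypothesis of \cref{lem-mothom}.(\ref{item-mot2}) is disjunctive: for each $i$ one needs $H_{2i}$ to be free \emph{or} $H_{2i+1}$ to vanish. Because all odd homology vanishes, the second alternative holds for every $i$, and this is so regardless of the torsion subgroup $\Z/m \subset H_2(X_\Sigma)$. Thus the torsion in $H_2$ is no obstruction; it will simply be carried along into the final answer rather than blocking the application of the theorem.

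Finally, I would read off the formula. Applying \cref{lem-mothom}.(\ref{item-mot2}) in its complete form gives
\[M(X_\Sigma)\cong \bigoplus_i H_{2i}(X_\Sigma)\{i\}\oplus H_{2i+1}(X_\Sigma)\{i\}[1],\]
and since every odd term vanishes this collapses to $\bigoplus_i H_{2i}(X_\Sigma)\{i\}$. Substituting the values from \cref{rmk-homology-toric}, namely $H_0=H_4=\Z$ and $H_2=\Z^{r-2}\oplus \Z/m$, and using $\Z\{0\}=\Z$, yields the claimed decomposition. Since each step is a direct citation of an earlier result, I expect no genuine obstacle here: the only point requiring a moment's care is confirming that the disjunctive hypothesis of the main theorem is met purely by the vanishing of odd homology, so that the torsion appearing in $H_2$ poses no difficulty and surfaces transparently as the summand $\Z/m\{1\}$.
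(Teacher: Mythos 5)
Your proposal is correct and follows essentially the same route as the paper: cite \cref{rmk-surface-cellular} for the cellular resolution, \cref{rmk-homology-toric} for the homology, and apply \cref{lem-mothom}.(\ref{item-mot2}) using the vanishing of odd homology. Your extra remark that the disjunctive hypothesis is satisfied by odd-degree vanishing alone, so the torsion in $H_2$ passes through harmlessly, is a correct and worthwhile clarification of the same argument.
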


\begin{rmk}
\begin{enumerate}
\item If $\Sigma$ is smooth, then $m=1$, so if $\Sigma$ is moreover complete, then:
\[M(X_\Sigma)\cong \Z \oplus \Z\{1\}^{r-2} \oplus \Z\{2\}.\]
\item 
The motive of $X_\Sigma$ as in \cref{thm-motive-surface} is pure Tate if and only if $m=1$. %
In other words: the motive of $X_\Sigma$ is not pure Tate if and only if all the 2-dimensional cones of $\Sigma$ are singular (in particular, $X_\Sigma$ must have at least three singular points), and the greatest common divisor of their multiplicities is $\geq 2$. %
\end{enumerate}
\end{rmk}

Let us now give some concrete examples.

\begin{ex} Let $k\geq 2$ and consider the singular fan $\Sigma$ in $\R^2$ spanned by the generators $-e_1,e_2,ke_1-e_2$. %
The resulting projective toric surface $X_\Sigma$ is a weighted projective space $\bbP(1,1,k)$ \cite[3.1.17]{cox}. 
We refine $\Sigma$ to a smooth fan $\widetilde\Sigma$ by adding the vector $(1,0)$, which induces the resolution of singularities $X_{\widetilde\Sigma}\to X_\Sigma$; the variety $\xs$ is the Hirzebruch surface $\mathcal{H}_k$ \cite[3.1.16]{cox}. %
We picture the $k=2$ case below.
\[\begin{tikzcd}
	& {} &&&&& {} \\
	{} & {\bullet} & {} & { } & { } & {} & \bullet \\
	&&& {} &&&&& {}
	\arrow[from=2-2, to=1-2]
	\arrow[from=2-2, to=2-1]
	\arrow[from=2-2, to=2-3]
	\arrow[from=2-2, to=3-4]
	\arrow["{\text{refines}}", from=2-4, to=2-5]
	\arrow["{(0,1)}"', from=2-7, to=1-7]
	\arrow["{(-1,0)}"', from=2-7, to=2-6]
	\arrow["{(2,-1)}", from=2-7, to=3-9]
\end{tikzcd}\]
We get
\[M(\bbP(1,1,k))\cong \Z\oplus \Z\{1\} \oplus \Z\{2\}.\]
In particular, $M(\bbP(1,1,k))$ is pure Tate. Notice also that $\bbP(1,1,k)$, $k\geq 2$ is a so-called fake projective plane: a complex algebraic variety with the same homology as $\bbP^2$ %
which is nonetheless not isomorphic to it (it is not even smooth). We observe that it has not only the same homology as $\bbP^2$, but also the same motive. In fact, it immediately follows from \cref{thm-motive-surface} that any fake projective plane which is a normal toric surface has the same motive as $\bbP^2$. We generalize this example in \cref{ex-weighted-general}.

Notice also that all the $\mathcal{H}_k$ have the same homology, %
and thus also the same motive, even though they are not pairwise isomorphic.
\end{ex}

Let us now give a more complicated example, where there is torsion in the second homology group.

\begin{ex} %
Consider the complete, singular fan $\Sigma$ in $\R^2$ spanned by the generators $u_1=e_2,u_2=-2e_1-e_2, u_3=2e_1-e_2$. All three 2-dimensional cones in it are singular, so there are three singular points. %
We resolve them by adding the rays spanned by the vectors $e_1,-e_1,e_1-e_2,-e_2,-e_1-e_2$.
\[\begin{tikzcd}
	&& {} &&&&&& {} \\
	& {} & \bullet & {} & { } && { } && \bullet \\
	{} & {} & {} & {} & {} && {} &&&& {}
	\arrow[from=2-3, to=1-3]
	\arrow[from=2-3, to=2-2]
	\arrow[from=2-3, to=2-4]
	\arrow[from=2-3, to=3-1]
	\arrow[from=2-3, to=3-2]
	\arrow[from=2-3, to=3-3]
	\arrow[from=2-3, to=3-4]
	\arrow[from=2-3, to=3-5]
	\arrow["{\text{refines}}"', from=2-5, to=2-7]
	\arrow[from=2-9, to=1-9]
	\arrow[from=2-9, to=3-7]
	\arrow[from=2-9, to=3-11]
\end{tikzcd}\]
The resolution of singularities diagram now looks like:
\[\xymatrix{\bbP^1\sqcup \bbP^1\sqcup F \ar[r] \ar[d] & \xs \ar[d] \\ \Spec(\C) \sqcup \Spec(\C) \sqcup \Spec(\C) \ar[r] & X_\Sigma},\]
where $F$ is %
a tree of three projective lines. 
The index of %
$\Sigma$ is 2, therefore
\[M(X_\Sigma)\cong \Z\oplus \Z\{1\}\oplus \Z/2\{1\} \oplus \Z\{2\}.\]
In particular, $M(X_\Sigma)$ is not pure Tate.
\end{ex}

Let us now describe the motive with compact support of normal affine toric surfaces. From \cref{prop-homology-jordan}, we deduce:

\begin{cor}\label{rmk-homology-affine} Let $\sigma$ be a two-dimensional cone in $N_\R \cong \R^2$. The Borel--Moore homology of $U_\sigma$ takes the values:
\[\hbm_i(U_\sigma) \cong \begin{cases}
0 & \text{if } i=0,1,3, \\ \Z/m & \text{if } i=2, \\ \Z & \text{if } i=4,
\end{cases}\]
where $m$ is the determinant of the two minimal ray generators of $\sigma$.
\end{cor}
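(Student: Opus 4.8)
The plan is to derive this as a direct specialization of \cref{prop-homology-jordan} to the fan $\Sigma$ consisting of the two-dimensional cone $\sigma$ together with all of its faces, so that $X_\Sigma = U_\sigma$. First I would record the relevant combinatorial invariants of this fan. The cone $\sigma$ contributes one maximal cone, its two edges contribute two one-dimensional cones, and the origin contributes the unique zero-dimensional cone, so that $d_2 = 1$, $d_1 = 2$ and $d_0 = 1$. Since $\sigma$ is strongly convex and two-dimensional, its two minimal ray generators are linearly independent and hence span $N_\R$, giving $s = 2$. Finally, the support of $\Sigma$ is $\sigma \neq \R^2$, so $\Sigma$ is not complete and the ``else'' branches of the formulas in \cref{prop-homology-jordan} apply.

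With these values in hand, the computation is pure substitution. In degrees $0$ and $1$ we are in the non-complete case, so $\hbm_0(U_\sigma) = 0$ and $\hbm_1(U_\sigma) = \Z^{-d_2+d_1-d_0} = \Z^{-1+2-1} = 0$. In degree $2$ we get $\hbm_2(U_\sigma) = \Z^{d_1-s} \oplus \Z/m = \Z^0 \oplus \Z/m = \Z/m$; in degree $3$ we get $\hbm_3(U_\sigma) = \Z^{2-s} = \Z^0 = 0$; and in degree $4$ we get $\hbm_4(U_\sigma) = \Z$. It remains to identify the index $m$: by definition it is the greatest common divisor of the determinants $\det(u_i,u_j)$ ranging over the minimal ray generators, but $\sigma$ has exactly two rays, so there is a single such determinant and $m = |\det(u_1,u_2)|$ is simply the determinant of the two minimal ray generators of $\sigma$, as claimed.

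The one point that requires care --- and the only place where the argument is not completely mechanical --- is the bookkeeping of $d_0$. The vanishing of $\hbm_1(U_\sigma)$ hinges on counting the origin as the unique zero-dimensional cone, i.e.\ taking $d_0 = 1$, even though our standing convention declares cones to be non-zero. I would therefore double-check against the conventions underlying \cref{prop-homology-jordan} that the enumeration $d_i$ in the toric spectral sequence includes the zero cone; with that confirmed, the alternating count $-d_2 + d_1 - d_0 = 0$ gives $\hbm_1(U_\sigma) = 0$ and the corollary follows.
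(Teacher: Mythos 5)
Your proof is correct and takes essentially the same route as the paper, which states this corollary as a direct specialization of \cref{prop-homology-jordan} to the fan generated by $\sigma$ (with $d_2=1$, $d_1=2$, $d_0=1$, $s=2$), exactly the substitution you carry out. Your caveat about whether $d_0$ counts the origin is well placed, and the convention $d_0=1$ is confirmed by checking that the same formulas reproduce \cref{rmk-homology-toric} in the complete case.
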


\begin{prop}  \label{prop-mc-affine}Let $\sigma$ be a two-dimensional cone in $N_\R\cong \R^2$. The motive with compact support of $U_\sigma$ is given by
\[M^c(U_\sigma)\cong \Z/m\{1\} \oplus \Z\{2\}\]
where $m$ is the determinant of the two minimal ray generators of $\sigma$. %
\begin{proof} By \cref{prop-affinesurface}, $U_\sigma$ admits a cellular resolution of singularities. The homology groups of $U_\sigma$ are given in \cref{rmk-homology-affine}: since the odd ones are zero, we are in the conditions of \cref{lem-mothom}.(\ref{item-mot2}). The result follows.
\end{proof}
\end{prop}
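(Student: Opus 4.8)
The plan is to apply \cref{lem-mothom}.(\ref{item-mot2}) directly, since that result converts a purely numerical condition on Borel--Moore homology into an explicit formula for $M^c$. To invoke it I must verify its two standing hypotheses: that $U_\sigma$ admits a cellular resolution of singularities, and that for each $i=0,1,2$ either $\hbm_{2i}(U_\sigma)$ is free abelian or $\hbm_{2i+1}(U_\sigma)$ vanishes.

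For the first hypothesis I would cite \cref{prop-affinesurface}, which guarantees that every normal affine toric surface admits a cellular resolution of singularities. Since $U_\sigma$ is exactly such a surface—the cone $\sigma$ being two-dimensional in $N_\R\cong\R^2$—this applies verbatim, with no additional work needed.

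For the numerical hypothesis I would read off the Borel--Moore homology of $U_\sigma$ from \cref{rmk-homology-affine}: it is concentrated in degrees $2$ and $4$, with $\hbm_2(U_\sigma)\cong\Z/m$ and $\hbm_4(U_\sigma)\cong\Z$, and all odd groups zero. The potentially problematic group is $\hbm_2(U_\sigma)\cong\Z/m$, which is torsion rather than free; but the hypothesis of \cref{lem-mothom}.(\ref{item-mot2}) is still met, because its alternative clause—the vanishing of the adjacent odd group $\hbm_3(U_\sigma)$—holds. Hence the conditions of the theorem are satisfied.

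Finally I would substitute into the conclusion of \cref{lem-mothom}.(\ref{item-mot2}): since all odd Borel--Moore homology groups vanish, the $[1]$-summands drop out and the formula collapses to $\hbm_2(U_\sigma)\{1\}\oplus\hbm_4(U_\sigma)\{2\}=\Z/m\{1\}\oplus\Z\{2\}$, as claimed. There is essentially no obstacle in this argument: the substantive work has already been carried out in establishing the cellular resolution (\cref{prop-affinesurface}) and in computing the Borel--Moore homology (\cref{rmk-homology-affine}). The only point requiring a moment's care is recognizing that the torsion in $\hbm_2(U_\sigma)$ does not violate the hypothesis of \cref{lem-mothom}, precisely because the odd homology vanishes.
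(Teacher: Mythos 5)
Your proposal is correct and follows exactly the paper's own argument: invoke \cref{prop-affinesurface} for the cellular resolution, read off the Borel--Moore homology from \cref{rmk-homology-affine}, and apply \cref{lem-mothom}.(\ref{item-mot2}) using the vanishing of the odd groups. Your extra remark that the torsion in $\hbm_2(U_\sigma)$ is harmless precisely because the adjacent odd group vanishes is a correct and worthwhile clarification, but it does not change the route.
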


\begin{ex} \label{ex-quasiproj} Let $d\geq 1$ and take $\sigma=\cone(d e_1-e_2,e_2)$. %
Then $U_\sigma$ is the \emph{rational normal cone} of degree $d$. Its motive with compact support is
\[M^c(U_\sigma)\cong \Z/d\{1\} \oplus \Z\{2\}.\]
\end{ex}

Let us give an example of a motive with compact support of a toric surface which is neither projective nor affine:

\begin{ex} Take the polyhedron in $M_\R\cong \R^2$ which is the convex region in the upper half-plane delimited by the segment $[(0,0),(1,0)]$ and two half-lines: one starting from $(0,0)$ and having tangent vector $(-1,k)$, and the other starting from $(1,0)$ and having tangent vector $(1,d)$, for some $k,d\geq 2$. %
Its normal fan $\Sigma$ is generated by the cones $\sigma_1=\cone(ke_1+e_2,e_2)$, $\sigma_2=\cone(-de_1+e_2,e_2)$ in $N_\R\cong \R^2$. %
The variety $X_\Sigma$ has exactly two singular points, of multiplicities $k$ and $d$. Resolving singularities by adding rays gives a smooth fan $\widetilde\Sigma$ which also comes from a polyhedron, e.g. for $k=d=2$ we need to add two rays, and the polyhedron whose normal fan is $\widetilde\Sigma$  is the infinite convex region bounded by the polyhedron pictured below.
\[\begin{tikzcd}
	{} &&&&& {} \\
	\\
	& \bullet &&& \bullet \\
	&& 0 & \bullet
	\arrow[no head, from=1-1, to=3-2]
	\arrow[no head, from=3-2, to=4-3]
	\arrow[no head, from=3-5, to=1-6]
	\arrow[no head, from=4-3, to=4-4]
	\arrow[no head, from=4-4, to=3-5]
\end{tikzcd}\]
Therefore, $\xs$ is quasiprojective, and the support of $\widetilde\Sigma$ is convex, so $\xs$ is cellular. We can thus describe the motive with compact support of $X_\Sigma$:
\[M^c(X_\Sigma)\cong \Z\{1\} \oplus (\Z/\gcd(k,d))\{1\} \oplus \Z\{2\}.\]
\end{ex}

\subsection{Motives of toric varieties of higher dimension}

As for surfaces, the Borel--Moore homology of a normal toric threefold can also be completely read off the $E^2$-page of the toric spectral sequence of \cite{jordan} for threefolds, see 3.5.1 therein for an explicit expression. This $E^2$-page can be computed using the \emph{torhom} package for Maple \cite{torhom}.

\begin{ex}Let $\sigma$ be the cone spanned by the vectors $e_1,e_2,e_1+e_3, e_2+e_3$ in $\R^3$. Let $U_\sigma$ be the corresponding affine toric variety. The facets %
of $\sigma$ are all smooth, but $\sigma$ itself is not, so the singular locus consists of a single point. %
We can describe $U_\sigma$ as the quadric $V(xy-zw)\subset \C^4$ \cite[1.2.9]{cox}. We can resolve the singularity by dividing $\sigma$ in two, adding the facet spanned by $e_1, e_2+e_3$: denote the corresponding fan by $\widetilde\Sigma$. The exceptional locus is  $\bbP^1$ \cite[11.1.12]{cox}. Since $|\widetilde\Sigma|$ is convex, by \cref{rmk-cond}.(\ref{cond-auto}), we can find a vector $u\in N$ satisfying the conditions of \cref{lemma-oneparam}; %
more explicitly, we could take $u=(1,2,1)$. Since moreover $\xs$ is quasiprojective by \cref{rmk-cond}.(\ref{item-quasiproj}), we have that $\xs$ is cellular. Therefore, $X_\Sigma$ admits a cellular resolution of singularities.

Let us compute the Borel--Moore homology of $U_\sigma$ %
using \emph{torhom}. 
We obtain:
\[\hbm_i(U_\sigma)=\begin{cases}\Z & \text{if } i=3,4,6, \\0 & \text{else.}\end{cases}\]
We are thus in the conditions of \cref{lem-mothom}.(\ref{item-mot3}). We apply it to get
\[M^c(U_\sigma)\cong \Z\{1\}[1] \oplus \Z\{2\} \oplus \Z\{3\}.\]
\end{ex}

\begin{ex} Let $\sigma$ be the cone spanned by the vectors $e_1+e_2$, $-e_1+e_2$, $e_3$ in $N_\R\cong\R^3$ and let $\Sigma$ be the fan it generates.  This is the product of the fan generated by $\cone(e_1+e_2, -e_1+e_2)\subset \R^2$ and the one generated by $\cone(e_3)\subset \R$, so the affine toric variety $U_\sigma$ can be described as
\[\Spec(\C[x,y,z]/(xy-2z))\times \C.\] %
The minimal singular cone of $\Sigma$ is generated by $e_1+e_2,-e_1+e_2$, so the singular locus is $Z\cong \C$.

We can refine $\Sigma$ by adding $\cone(e_2,e_3)$, which generates a fan $\widetilde\Sigma$. The exceptional locus is then $E\cong \bbP^1\times \C$. We now compute the Borel--Moore homology of $U_\sigma$ using \emph{torhom}, getting:
\[\hbm_i(U_\sigma)=\begin{cases}
\Z/2 &\text{if } i=4,\\
\Z &\text{if } i=6,\\
0 & \text{else.}\\
\end{cases}\]
Since $|\widetilde\Sigma|$ is convex, by \cref{rmk-cond}.(\ref{cond-auto}), we can find a vector $u\in N$ satisfying the conditions of \cref{lemma-oneparam}; more explicitly, we could take $u=(1,2,1)$. Also, $\xs$ is quasiprojective by \cref{rmk-cond}.(\ref{item-quasiproj}). Therefore, $\xs$ is cellular, and thus, $X_\Sigma$ admits a cellular resolution of singularities. We can therefore apply \cref{lem-mothom}, getting
\[M^c(U_\sigma)\cong \Z/2\{2\} \oplus \Z\{3\}.\]
\end{ex}

\begin{ex} Consider the cube with vertices $(\pm 1, \pm 1, \pm 1)$ in $\R^3$. Let $N$ be the lattice generated by the vertices. Let $\Sigma$ be the complete fan in $N_\R$ whose six maximal cones are the cones over the faces of the cube. %
The region bounded inside the cube is the polar polytope of another lattice polytope containing zero \cite[2.3.11]{cox}, so $X_\Sigma$ is projective. %
Its singular locus consists of the six points corresponding to the six maximal cones. 

We can resolve the singularities of $X_\Sigma$ by refining the maximal cones, dividing each of them by a diagonal.  %
The exceptional locus is isomorphic to $(\bbP^1)^{\sqcup 6}$. Moreover, $\xs$ is a smooth and projective (by \cref{rmk-cond}.(\ref{item-quasiproj})) toric variety, so it is cellular. Thus, $X_\Sigma$ admits a cellular resolution of singularities.

We now find the homology of $X_\Sigma$. From \cite[Pages 105-106]{fulton-toric}, we read the homology groups in all degrees but 1 and 5. In those degrees, we use \cite[3.5.1]{jordan} to conclude that
\[H_i(X_\Sigma)=\Z \text{ for } i=0,2,6, \hspace{.5cm} H_3(X_\Sigma)=\Z^2, \hspace{.5cm} H_4(X_\Sigma)=\Z^5, \hspace{.5cm} H_1(X_\Sigma)=H_5(X_\Sigma)=0.\]
Since all the groups in even grading are free, we can apply \cref{lem-mothom}.(\ref{item-mot2}):
\[M(X_\Sigma) \cong \Z\oplus \Z\{1\} \oplus \Z^2\{1\}[1] \oplus \Z^5\{2\} \oplus \Z\{3\}.\]
\end{ex}

 \begin{ex} The exceptional locus of a toric resolution of singularities need not be cellular. For example, in $\R^3$ take the fan generated by the three cones $\sigma_1=\cone(e_1,e_2,e_1+e_2\pm e_3)$, $\sigma_2=\cone(e_2,-e_1-e_2, -e_2\pm e_3)$, $\sigma_3=\cone(-e_1-e_2, e_1, -e_2\pm e_3)$. %
The singular locus consists of the three points corresponding to the three maximal cones. We can resolve the singularities by subdivision: add the faces generated by the first two cone generators in each of the cones. The exceptional locus consists of three projective lines, where the first/second/third intersects the second/third/first each in a different point%
: this is not a cellular variety. %
\end{ex}

 \begin{ex} \label{ex-weighted-general}Let $\Sigma$ be the complete fan in $N_\R\cong \R^n$ generated by $e_1,\dots,e_n, -e_1-\cdots -ke_n$ for some $k\geq 2$, so $X_\Sigma$ is the weighted projective space $\bbP(1,\dots,1,k)$. %
The singular locus is a single point, and we can resolve this singularity by adding the ray $-e_n$, generating a fan $\widetilde\Sigma$; the variety $\xs$ is projective by \cref{rmk-cond}.(\ref{item-quasiproj}). The exceptional divisor is $\bbP^{n-1}$. The homology of weighted projective spaces is the same as that of projective spaces \cite{kawasaki}, so applying \cref{lem-mothom} proves that the motive of $\bbP(1,\dots,1,k)$ is the same as the motive of $\bbP^n$. We raise the question: does every fake projective space have the same motive as the projective space of the same dimension?
\end{ex}

\begin{ex}\label{ex-bbfk} We consider the example of \cite[3.5.4]{jordan}, taken from \cite[3.5]{diviseurs}. Take the complete fan $\Sigma$ in $N_\R\cong \R^3$ whose maximal cones $\sigma_i$, $i=1,\dots,5$ are defined as follows: let $v_1=e_1+e_2+e_3$, $v_2=-e_1+e_2+e_3$, $v_3=-e_2+e_3$, and $v_i=v_{i-3}-2e_3$ for $i=4,5,6$. We let $\sigma_i$ be the cones spanned by the the following lists of vectors, given in order:
\[(v_1,v_2,v_3), \hspace{.5cm} (v_1,v_2,v_5,v_4), \hspace{.5cm} (v_2,v_3,v_6,v_5), \hspace{.5cm}(v_3,v_1,v_4,v_6), \hspace{.5cm}(v_4,v_5,v_6).\] %
Note that $\Sigma$ is the normal fan of the polytope $\Conv(0,(0,-1,-1),(\pm 2,1,-1),(0,0,-2))$, 
so $X_\Sigma$ is projective. We compute its homology using \emph{torhom}, getting $H_2(X_\Sigma)=\Z\oplus \Z/2$ and $H_3(X_\Sigma)=\Z$: this shows that the conditions in the homology groups in  \cref{lem-mothom}.(\ref{item-mot3}) are not always satisfied.

We can also determine the singular locus of $X_\Sigma$. The minimal singular cones of $\Sigma$ are the facets generated by the following pairs of vectors, displayed in order: %
\[(v_1,v_2),\hspace{.5cm} (v_1,v_4),\hspace{.5cm} (v_2,v_5),\hspace{.5cm} (v_3,v_6),\hspace{.5cm} (v_4,v_5).\]
Therefore, the singular locus $Z$ of $X_\Sigma$ is the intersection of five projective lines $P_i$ with only three points $x_i$ (corresponding to the cone $\sigma_i$) of common intersection, as displayed in \cref{fig:sing-notcell}.
\begin{figure}
\centering
\def\svgscale{0.3}
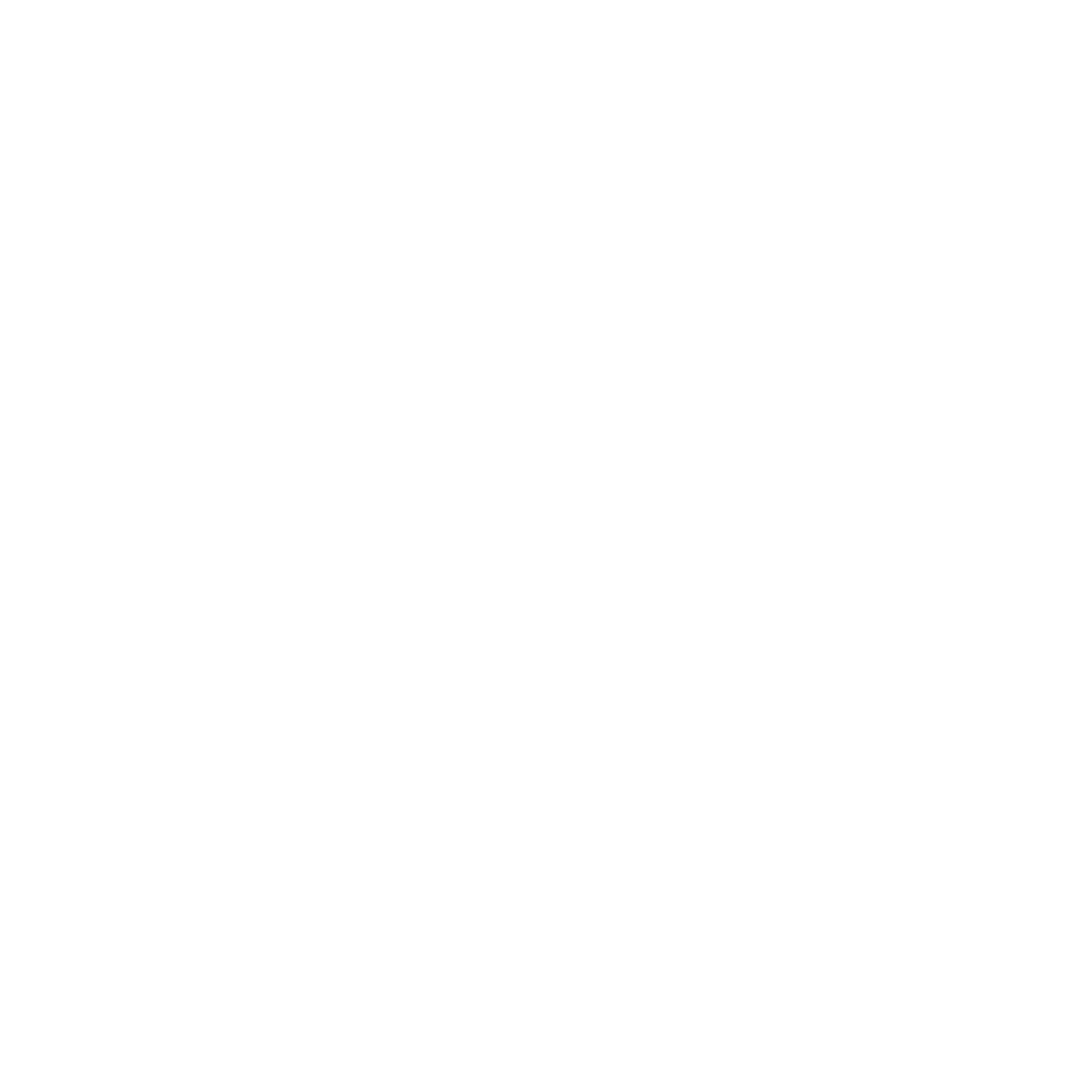
\caption{The singular locus of the toric variety in \cref{ex-bbfk}}
\label{fig:sing-notcell}
\end{figure}
We see that $Z$ is not cellular, as its first Borel--Moore homology group is not zero: $Z$ is complete, and an argument as in \cref{rmk-homE} proves that $H_1(Z)=\Z$. Therefore, $X_\Sigma$ does not admit a cellular resolution of singularities.
\end{ex}

\bibliographystyle{alpha}
\bibliography{motivic.bib}

\begin{thebibliography}{RLXD91}

\bibitem[BBFK96]{diviseurs}
Gottfried Barthel, Jean-Paul Brasselet, Karl-Heinz Fieseler, and Ludger Kaup.
\newblock Diviseurs invariants et homomorphisme de {P}oincar\'e{} de
  vari\'et\'es toriques complexes.
\newblock {\em Tohoku Math. J. (2)}, 48(3):363--390, 1996.

\bibitem[Bro05]{brosnan}
Patrick Brosnan.
\newblock On motivic decompositions arising from the method of {B}ia\l
  ynicki-{B}irula.
\newblock {\em Invent. Math.}, 161(1):91--111, 2005.

\bibitem[ByB73]{bb1}
A.~Bia\l\!~ynicki Birula.
\newblock Some theorems on actions of algebraic groups.
\newblock {\em Ann. of Math. (2)}, 98:480--497, 1973.

\bibitem[ByB76]{bb2}
A.~Bia\l\!~ynicki Birula.
\newblock Some properties of the decompositions of algebraic varieties
  determined by actions of a torus.
\newblock {\em Bull. Acad. Polon. Sci. S\'er. Sci. Math. Astronom. Phys.},
  24(9):667--674, 1976.

\bibitem[Car02]{carrell}
James~B. Carrell.
\newblock Torus actions and cohomology.
\newblock In {\em Algebraic quotients. {T}orus actions and cohomology. {T}he
  adjoint representation and the adjoint action}, volume 131 of {\em
  Encyclopaedia Math. Sci.}, pages 83--158. Springer, Berlin, 2002.

\bibitem[CD19]{cisinski-deglise}
Denis-Charles Cisinski and Fr\'ed\'eric D\'eglise.
\newblock {\em Triangulated categories of mixed motives}.
\newblock Springer Monographs in Mathematics. Springer, Cham, [2019]
  \copyright2019.

\bibitem[CLS11]{cox}
David~A. Cox, John~B. Little, and Henry~K. Schenck.
\newblock {\em Toric varieties}, volume 124 of {\em Graduate Studies in
  Mathematics}.
\newblock American Mathematical Society, Providence, RI, 2011.

\bibitem[Fra16]{torhom}
Matthias Franz.
\newblock torhom, 2000-2016.
\newblock a Maple package available electronically at
  \url{https://www.math.uwo.ca/faculty/franz/torhom/}.

\bibitem[Ful93]{fulton-toric}
William Fulton.
\newblock {\em Introduction to toric varieties}, volume 131 of {\em Annals of
  Mathematics Studies}.
\newblock Princeton University Press, Princeton, NJ, 1993.
\newblock The William H. Roever Lectures in Geometry.

\bibitem[Ful98]{fulton-intersection}
William Fulton.
\newblock {\em Intersection theory}, volume~2 of {\em Ergebnisse der Mathematik
  und ihrer Grenzgebiete. 3. Folge. A Series of Modern Surveys in Mathematics
  [Results in Mathematics and Related Areas. 3rd Series. A Series of Modern
  Surveys in Mathematics]}.
\newblock Springer-Verlag, Berlin, second edition, 1998.

\bibitem[GW20]{gortz}
Ulrich G\"ortz and Torsten Wedhorn.
\newblock {\em Algebraic geometry {I}. {S}chemes---with examples and
  exercises}.
\newblock Springer Studium Mathematik---Master. Springer Spektrum, Wiesbaden,
  second edition, [2020] \copyright2020.

\bibitem[Har77]{hartshorne}
Robin Hartshorne.
\newblock {\em Algebraic geometry}, volume No. 52 of {\em Graduate Texts in
  Mathematics}.
\newblock Springer-Verlag, New York-Heidelberg, 1977.

\bibitem[Jor98]{jordan}
Arno Jordan.
\newblock Homology and cohomology of toric varieties, 1998.
\newblock Available electronically at
  \href{https://kops.uni-konstanz.de/urn/urn:nbn:de:bsz:352-opus-20372}{https://kops.uni-konstanz.de/urn/urn:nbn:de:bsz:352-opus-20372}.

\bibitem[Kah99]{kah99}
Bruno Kahn.
\newblock Motivic cohomology of smooth geometrically cellular varieties.
\newblock In {\em Algebraic {$K$}-theory ({S}eattle, {WA}, 1997)}, volume~67 of
  {\em Proc. Sympos. Pure Math.}, pages 149--174. Amer. Math. Soc., Providence,
  RI, 1999.

\bibitem[Kar00]{karpenko}
N.~A. Karpenko.
\newblock Cohomology of relative cellular spaces and of isotropic flag
  varieties.
\newblock {\em Algebra i Analiz}, 12(1):3--69, 2000.

\bibitem[Kaw73]{kawasaki}
Tetsuro Kawasaki.
\newblock Cohomology of twisted projective spaces and lens complexes.
\newblock {\em Math. Ann.}, 206:243--248, 1973.

\bibitem[MVW06]{mvw}
Carlo Mazza, Vladimir Voevodsky, and Charles Weibel.
\newblock {\em Lecture notes on motivic cohomology}, volume~2 of {\em Clay
  Mathematics Monographs}.
\newblock American Mathematical Society, Providence, RI; Clay Mathematics
  Institute, Cambridge, MA, 2006.

\bibitem[Nie08]{nie}
Zhaohu Nie.
\newblock Karoubi's construction for motivic cohomology operations.
\newblock {\em Amer. J. Math.}, 130(3):713--762, 2008.

\bibitem[NP19]{nicaise-payne}
Johannes Nicaise and Sam Payne.
\newblock A tropical motivic {F}ubini theorem with applications to
  {D}onaldson-{T}homas theory.
\newblock {\em Duke Math. J.}, 168(10):1843--1886, 2019.

\bibitem[PT21]{petersen-tosteson}
Dan Petersen and Philip Tosteson.
\newblock Factorization statistics and bug-eyed configuration spaces.
\newblock {\em Geom. Topol.}, 25(7):3691--3723, 2021.

\bibitem[RLXD91]{rossello}
F.~Rossell\'o-Llompart and S.~Xamb\'o-Descamps.
\newblock Chow groups and {B}orel-{M}oore schemes.
\newblock {\em Ann. Mat. Pura Appl. (4)}, 160:19--40, 1991.

\bibitem[ROsr08]{rondigs-ostvaer}
Oliver R\"ondigs and Paul~Arne \O\!~stv\ae r.
\newblock Modules over motivic cohomology.
\newblock {\em Adv. Math.}, 219(2):689--727, 2008.

\bibitem[Sch05]{schwede}
Karl Schwede.
\newblock Gluing schemes and a scheme without closed points.
\newblock In {\em Recent progress in arithmetic and algebraic geometry}, volume
  386 of {\em Contemp. Math.}, pages 157--172. Amer. Math. Soc., Providence,
  RI, 2005.

\bibitem[Voe00]{voe00}
Vladimir Voevodsky.
\newblock Triangulated categories of motives over a field.
\newblock In {\em Cycles, transfers, and motivic homology theories}, volume 143
  of {\em Ann. of Math. Stud.}, pages 188--238. Princeton Univ. Press,
  Princeton, NJ, 2000.

\end{thebibliography}
\end{document}